\numberwithin{equation}{section}
\newtheorem{theorem}{Theorem}[section]
\newtheorem{lemma}{Lemma}[section]
\newtheorem{corollary}{Corollary}[section]
\newtheorem{remark}{Remark}[section]
\DeclareMathOperator{\lcm}{lcm}
\def\bals#1\nals{\begin{align*}#1\end{align*}}
\def\bal#1\nal{\begin{align}#1\end{align}}
\newcommand{\sqr}[2]{{\vcenter{\vbox{\hrule height#2pt
                \hbox{\vrule width#2pt height#1pt \kern#1pt
                \vrule width#2pt}\hrule height#2pt}}}}
\newcommand{\beq}{\begin{equation}}
\newcommand{\eeq}{\end{equation}}
\newcommand{\beqar}{\begin{eqnarray}}
\newcommand{\eeqar}{\end{eqnarray}}
\def\beqars{\begin{eqnarray*}}
\def\eeqars{\end{eqnarray*}}
\newcommand{\pmd}{\hspace{-3mm} \pmod}
\newcommand{\qu}[2]{\Bigl({\frac{#1}{#2}}\Bigr) }
\newcommand{\dqu}[2]{\ds{\qu{#1}{#2}}}
\def \ds{\displaystyle}
\newcommand{\nn}{\mathbb{N}}
\newcommand{\zz}{\mathbb{Z}}
\newcommand{\qq}{\mathbb{Q}}
\newcommand{\cc}{\mathbb{C}}
\newcommand{\ff}{\mathbb{F}}
\newcommand{\hh}{\mathbb{H}}
\begin{document}

\title{Projections of modular forms on Eisenstein series and its application to Siegel's formula}%
\author{Zafer Selcuk Aygin}%
\address{Department of Mathematics and Statistics, University of Calgary, Calgary, AB T2N 1N4, Canada}%
\email{selcukaygin@gmail.com}%
\subjclass[2010]{11F11, 11F20,  11F27, 11E20, 11E25, 11F30}%
\dedicatory{Dedicated to Professor Emeritus Kenneth S.\ Williams on the occasion of his $80^{th}$ birthday.}%
\keywords{Dedekind eta function; theta functions; Eisenstein series; modular forms; cusp forms; Fourier coefficients.}

\maketitle

\begin{abstract}
Let $k \geq 2$ and $N$ be positive integers and let $\chi$ be a Dirichlet character modulo $N$. Let $f(z)$ be a modular form in $M_k(\Gamma_0(N),\chi)$. Then we have a unique decomposition $f(z)=E_f(z)+S_f(z)$, where $E_f(z) \in E_k(\Gamma_0(N),\chi)$ and $S_f(z) \in S_k(\Gamma_0(N),\chi)$. In this paper we give an explicit formula for $E_f(z)$ in terms of Eisenstein series. Then we apply our result to certain families of eta quotients and to representations of positive integers by $2k$--ary positive definite quadratic forms in order to give an alternative version of Siegel's formula for the weighted average number of representations of an integer by quadratic forms in the same genus. Our formula for the latter is in terms of generalized divisor functions and does not involve computation of local densities.
\end{abstract}
\noindent


\section{Introduction and notation} \label{sec_intro}
Let $\nn$, $\nn_0$, $\zz$, $\qq$, $\cc$ and $\hh$ denote the sets of positive integers, non-negative integers, integers, rational numbers, complex numbers and upper half plane of complex numbers, respectively. Throughout the paper $z$ denotes a complex number in $\hh$, $p$ always denotes a prime number, all divisors considered are positive divisors, $q$ stands for $e^{2 \pi i z}$ and $\chi_d(n)$ denotes the Kronecker symbol $\ds \dqu{d}{n}$, we use the subscript $K$ to avoid confusion with fractions.  Let $N \in \nn$ and $\chi$ be a Dirichlet character modulo $N$. The space of modular forms of weight $k$ for $\Gamma_0(N)$ with character $\chi$ is denoted by $M_{k}(\Gamma_0(N),\chi)$; and $E_{k}(\Gamma_0(N),\chi)$, $S_{k}(\Gamma_0(N),\chi)$ denote its Eisenstein and cusp form subspaces, respectively. Then we have
\begin{align*}
M_k(\Gamma_0(N),\chi) = E_k(\Gamma_0(N),\chi) \oplus S_k(\Gamma_0(N),\chi).
\end{align*}
That is, given $f(z) \in M_k(\Gamma_0(N),\chi)$, we can write
\begin{align}
f(z)=E_f(z) + S_f(z), \label{decomp}
\end{align}
where $E_f(z) \in E_k(\Gamma_0(N),\chi)$ and $S_f(z) \in S_k(\Gamma_0(N),\chi)$ are uniquely determined by $f$. Let $ \epsilon, \psi$ be primitive Dirichlet characters such that $\epsilon {\psi} = \chi$ (i.e., $\epsilon(n) {\psi}(n) = \chi(n)$ for all $n \in \zz$ coprime to $N$) with conductors say $L$ and $M$, respectively and suppose ${LM} \mid N $. Let $ d $ be a positive divisor of $ {N}/{LM}$ and $2\leq k \in \nn$ be such that $\chi(-1)=(-1)^k$. Let $\omega$ be the primitive Dirichlet character corresponding to $\epsilon \overline{\psi}$ and $\mathcal{M}_\omega$ be its conductor. We define the Eisenstein series associated with $\epsilon$ and ${\psi}$ by
\begin{align}
{E_{k}(\epsilon,\psi; dz)} :=& \epsilon(0)+ \left(\frac{\mathcal{M}_\omega }{M } \right)^k  \left( \frac{ W(\overline{\psi})}{W(\omega)} \right)\left( \frac{-2k}{B_{k,\overline{\omega}}} \right) \prod_{p \mid \lcm(L,M)} \frac{p^k}{p^k-\omega(p)} \\
& \qquad\qquad \times \sum_{n =1}^{\infty} \sigma_{k-1}(\epsilon, {\psi}; n) e^{2\pi i n d z} \label{eis},
\end{align}
where $\overline{\omega}$ is the complex conjugate of $\omega$,
\begin{align*}
& \sigma_{k-1}(\epsilon,\psi; n) :=  \sum_{1 \leq d\mid n}\epsilon(n/d){\psi}(d)d^{k-1} 
\end{align*}
is the generalized sum of divisors function associated with $\epsilon$ and $\psi$,
\begin{align*}
& W(\psi)  := \sum_{a=0}^{M-1} \psi(a) e^{2\pi i a/M}
\end{align*}
is the Gauss sum of $\psi$ and $B_{k,\overline{\omega}}$ is the $k$-th generalized Bernoulli number associated with $\overline{\omega}$ defined by
\bals
\sum_{k = 0}^\infty \frac{B_{k,\overline{\omega}}}{k !} t^k = \sum_{a=1}^{\mathcal{M}_{\overline{\omega}}} \frac{{\overline{\omega}}(a) t e^{at}}{e^{\mathcal{M}_{{\overline{\omega}}} t } -1},
\nals
see \cite[end of pg.\ 94]{miyake}. By \cite[Corollary 8.5.5]{cohenbook} (alternatively \cite[Theorem 4.7.1, (7.1.13) and Lemma 7.2.19]{miyake}), we have
\bals
E_{k}(\epsilon,\psi; dz ) \in M_k(\Gamma_0(N),\chi) & \mbox{ if $(k, \epsilon,\psi) \neq (2,\chi_1,\chi_1)$,}
\nals
and
\bals
E_{2}(\chi_1,\chi_1;z) - N E_{2}(\chi_1,\chi_1;Nz)  \in M_2(\Gamma_0(N),\chi_1).
\nals
\begin{remark} Let $L(\epsilon \overline{\psi}, k)$ be the Dirichlet L-function defined by
\bals
L(\epsilon \overline{\psi}, k):= \sum_{n \geq 1} \frac{\epsilon(n) \overline{\psi}(n)}{ n^{k}}.
\nals
The Eisenstein series we define in \eqref{eis} is equal to $\ds {E_k(Mdz;\epsilon,\overline{\psi})}/({2 L(\epsilon \overline{\psi}, k)})$ in the notation of Theorem 7.1.3 and Theorem 7.2.12 of \cite{miyake} and to $\ds {G_k({\psi},\epsilon)(dz)}/{L(\epsilon \overline{\psi}, k)}$ in the notation of Corollary 8.5.5 and Definition 8.5.10 of \cite{cohenbook}. Further treatment to obtain the form in \eqref{eis} is done by using the formula for $L(\epsilon \overline{\psi}, k)$ given in Theorem 3.3.4 and (3.3.14) of \cite{miyake}. 
This normalization is chosen so that the constant terms given in \eqref{7_6} and \eqref{7_4} are simpler. This in return simplifies the notation in Sections \ref{sec_orth} and \ref{sec_ft}.
\end{remark}
Letting $D(N,\cc)$ to denote the group of Dirichlet characters modulo $N$, we define
\begin{align*}
\mathcal{E}(k,N,\chi):= & \{ (\epsilon,\psi) \in D(L,\cc) \times D(M,\cc) : \mbox{$\epsilon$, $\psi$ primitive, }  \\
& \quad \epsilon(-1) \psi(-1)=(-1)^k,~ \epsilon {\psi} = \chi  \mbox{ and } LM \mid N \}.
\end{align*}
The set
\begin{align*}
& \{ E_{k}(\epsilon,\psi; dz) : (\epsilon,\psi) \in \mathcal{E}(k,N,\chi), d \mid N/LM \}
\end{align*}
constitutes a basis for $E_k(\Gamma_0(N),\chi)$ whenever $k \geq 2$ and $(k,\chi) \neq (2,\chi_1)$; the set
\begin{align*}
& \{ E_{2}(\chi_1,\chi_1;z )-d E_{2}(\chi_1,\chi_1;dz) : 1< d \mid N/LM \} \\
& \qquad \cup \{ E_{2}(Mdz; \epsilon,\psi) : (\epsilon,\psi) \in \mathcal{E}(2,N,\chi_1), (\epsilon,\psi) \neq ( \chi_1,\chi_1), d \mid N/LM \}
\end{align*}
constitutes a basis for  $E_2(\Gamma_0(N),\chi_1)$, see \cite[Theorems 8.5.17 and 8.5.22]{cohenbook}, or \cite[Proposition 5]{weis}. Then we have
\begin{align}
E_f(z) = \sum_{(\epsilon, \psi) \in \mathcal{E}(k,N,\chi) } \sum_{d \mid N/LM} a_f(\epsilon,\psi,d) E_k(\epsilon,\psi;dz), \label{RNT1}
\end{align}
for some $a_f(\epsilon,\psi,d) \in \cc$. When $S_f(z)=0$, it is easy to determine $a_f(\epsilon,\psi,d)$ by comparing the first few Fourier coefficients of $f(z)$ expanded at $i \infty$ and the first few Fourier coefficients of the right hand side of \eqref{RNT1} expanded at $i \infty$. However, if $S_f(z)\neq 0$ and an explicit basis for $S_k(\Gamma_0(N),\chi)$ is not known then this method fails. In this paper we solve this problem, in other words we obtain $a_f(\epsilon,\psi,d)$ explicitly in an accessible form for all $f \in M_k(\Gamma_0(N),\chi)$ where $k \geq 2$, see Theorem \ref{mainth}. Our treatment is general and its special cases agree with previously known formulas. Additionally, we give a new treatment of Siegel's formula for representation numbers of quadratic forms, see Theorem \ref{thetamain}.

Let $a \in \zz$ and $c \in \nn_0$ be coprime. For an $f(z) \in M_{k}(\Gamma_0(N),\chi)$ we denote the constant term of $f(z)$ in the Fourier expansion of $f(z)$ at the cusp $a/c$ by
\begin{align*}
[0]_{a/c} f = \lim_{z \rightarrow i \infty} (c z + d)^{-k} f \left( \frac{az+b}{cz+d} \right),
\end{align*}
where $b,d \in \zz$ such that $\begin{bmatrix} a & b \\ c & d \end{bmatrix} \in SL_2(\zz)$. The value of $[0]_{a/c} f $ does not depend on the choice of $b, d$. We denote the $n$th Fourier coefficient of $f(z)$ in the expansion at the cusp $i\infty$ by $[n] f$. Letting $\phi(n)$ denote the Euler totient function, we define an average associated with $\psi$ for the constant terms of Fourier series expansions of modular forms at cusps as follows:
\begin{align}
[0]_{c,{\psi}}f := \frac{1}{\phi(c)} \sum_{\substack{a =1,\\ \gcd(a,c)=1} }^c {\psi}(a)[0]_{a/c}f. \label{newcusp}
\end{align}
We note that working with this average of constant terms at cusps is a new idea which helps studying modular form spaces with nontrivial character, see Section \ref{sec_ft} for details.

Letting $v_p(n)$ to denote the highest power of $p$ dividing $n$ and $\mu(n)$ to be the M\"obius function we are ready to state the main theorem.
\begin{theorem}[Main Theorem] \label{mainth}
Let $f(z) \in M_k(\Gamma_0(N),\chi)$, where $N,k \in \nn$, $k \geq 2$, $\chi$ is a Dirichlet character modulo $N$ that satisfies $\chi(-1)=(-1)^k$. Let $E_f(z)$ be defined by \eqref{decomp}, then 
\begin{align*}
E_f(z) = \sum_{(\epsilon, \psi) \in \mathcal{E}(k,N,\chi) } \sum_{d \mid N/LM} a_f(\epsilon,\psi,d) E_k(\epsilon,\psi;dz),
\end{align*}
where 
{ \begin{align*}
a_f(\epsilon,\psi,d) & = \prod_{p \mid N} \frac{p^k}{p^k - \epsilon(p) \overline{\psi}(p)} \sum_{c \in C_{N}(\epsilon,\psi) }  \mathcal{R}_{k,\epsilon,\psi}(d,c/M) \mathcal{S}_{k,N/LM,\epsilon,\psi}(d,c/M) [0]_{c,{\psi}}f,
\end{align*}}
with
\begin{align}
C_{N}(\epsilon,\psi) & :=  \{ c_1 M :  c_1 \mid N/LM\}, \label{CMdef} \\
\mathcal{R}_{k,\epsilon,\psi}(d,c) & := \epsilon \left( \frac{-d}{\gcd(d,c)} \right) \overline{\psi} \left( \frac{c}{\gcd(d,c)} \right) \left(\frac{\gcd(d,c)}{c} \right)^k,\label{RR}
\end{align}
and
\begin{align}
\mathcal{S}_{k,N,\epsilon,\psi}(d,c)&:= \mu\left( \frac{dc}{\gcd(d,c)^2} \right) \prod_{\substack{p \mid \gcd(d,c),\\0<v_p(d)=v_p(c)<v_p(N)}} \left( \frac{p^k +\epsilon(p) \overline{\psi}(p)}{p^k} \right).\label{SS}
\end{align}
\end{theorem}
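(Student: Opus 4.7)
The plan is to exploit the fact that $S_f$ vanishes at every cusp of $\Gamma_0(N)$, so $[0]_{a/c}S_f=0$ for every admissible $a/c$, and hence $[0]_{c,\psi}f=[0]_{c,\psi}E_f$ for every Dirichlet character $\psi$ and every positive $c$. Substituting \eqref{RNT1} produces the linear system
\begin{align*}
[0]_{c,\psi}f \;=\; \sum_{(\epsilon,\psi')\in\mathcal{E}(k,N,\chi)}\sum_{d'\mid N/LM} a_f(\epsilon,\psi',d')\,[0]_{c,\psi}E_k(\epsilon,\psi';d'z),
\end{align*}
and the task becomes to invert it and read off $a_f(\epsilon,\psi,d)$.

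The first step I would carry out is the explicit computation of the averaged constant term $[0]_{c,\psi}E_k(\epsilon,\psi';d'z)$, applying the classical transformation of $E_k(\epsilon,\psi';d'z)$ under $\gamma=\begin{bmatrix}a & b \\ c & e\end{bmatrix}\in SL_2(\zz)$ and isolating the constant coefficient; this is the content of Section \ref{sec_ft}. Extracting the constant term at $a/c$ produces a character sum in $a\pmod c$ involving $\psi'$, and averaging against $\psi(a)/\phi(c)$ invokes orthogonality of Dirichlet characters: contributions with $\psi'\neq\psi$ vanish, and the surviving terms force $c$ to be a multiple of $M$ dividing $N/L$, i.e.\ $c\in C_N(\epsilon,\psi)$. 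What remains is proportional, up to the global Euler prefactor, to $\mathcal{R}_{k,\epsilon,\psi}(d',c/M)$.

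The second step is to invert the resulting matrix indexed by divisors $d',\,c_1=c/M$ of $N/LM$. Both the matrix and its proposed inverse factor as Euler products over primes $p\mid N/LM$, so the problem reduces to a local inversion at each such prime. For fixed $p$ one verifies by a direct computation — essentially Möbius inversion on the divisor lattice, corrected at intermediate valuations by the factor $(p^k+\epsilon(p)\overline{\psi}(p))/p^k$ when $\epsilon(p)\overline{\psi}(p)\neq 0$ — that the $(v_p(N/LM)+1)\times(v_p(N/LM)+1)$ matrix built from $\mathcal{R}_{k,\epsilon,\psi}$ is inverted by the corresponding entries of $\mathcal{S}_{k,N/LM,\epsilon,\psi}$. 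The global prefactor $\prod_{p\mid N} p^k/(p^k-\epsilon(p)\overline{\psi}(p))$ then compensates the reciprocal factor appearing in the normalization \eqref{eis} of $E_k(\epsilon,\psi;dz)$, and the claimed expression for $a_f(\epsilon,\psi,d)$ emerges.

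The main obstacle is the local inversion in the second step. One must separate three regimes of $(v_p(d),v_p(c))$ — unequal, equal and extremal (the value $0$ or $v_p(N/LM)$), and equal and strictly intermediate — since each contributes differently to the Kronecker--$\delta$ relation
\begin{align*}
\sum_{c_1 \mid N/LM} \mathcal{R}_{k,\epsilon,\psi}(d',c_1)\,\mathcal{S}_{k,N/LM,\epsilon,\psi}(d,c_1) \;=\; \delta_{d,d'}\cdot(\text{Euler factor}),
\end{align*}
while simultaneously tracking the primitivity conditions that determine when $\epsilon(p)$ or $\overline{\psi}(p)$ vanishes. This local orthogonality, carried out in Section \ref{sec_orth}, combined with the cusp-Fourier computation of Section \ref{sec_ft}, completes the proof of Theorem \ref{mainth}.
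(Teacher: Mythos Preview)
Your outline matches the paper's proof almost exactly: vanish $S_f$ at cusps, average with $\psi$ to isolate a single pair $(\epsilon,\psi)$ via character orthogonality (the paper's Lemma~\ref{lem:4_2} and Theorem~\ref{th:4_1}), obtain the linear system with matrix $\mathcal{R}_{k,\epsilon,\psi}(c/M,d)$, and invert it prime-by-prime using the orthogonality relation of Theorem~\ref{th:3_1}.

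Two points you should address. First, your claim that the global prefactor ``compensates the reciprocal factor appearing in the normalization \eqref{eis}'' is a misattribution: the factor $\prod_{p\mid N}p^k/(p^k-\epsilon(p)\overline{\psi}(p))$ is exactly the inverse of the diagonal value produced by Theorem~\ref{th:3_1}, extended harmlessly from $p\mid N/LM$ to $p\mid N$ because $\epsilon(p)\overline{\psi}(p)=0$ for $p\mid LM$. Second, and more importantly, you have silently assumed the decomposition \eqref{RNT1} in the form that uses the $E_k(\epsilon,\psi;dz)$ as a basis; when $(k,\chi)=(2,\chi_1)$ this fails because $E_2(\chi_1,\chi_1;z)$ is only quasi-modular and the correct basis involves $L_d(z)=E_2(\chi_1,\chi_1;z)-dE_2(\chi_1,\chi_1;dz)$. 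The paper handles this case separately in the second half of Section~\ref{sec_proof}, first solving for coefficients $c_f(\chi_1,\chi_1,d)$ attached to $L_d$ and then checking that the resulting expression can be rewritten in the stated form via the identity $\sum_{1<d\mid N}c_f(\chi_1,\chi_1,d)=a_f(\chi_1,\chi_1,1)$. Without this step your argument does not cover weight~$2$ with trivial character.
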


\begin{remark}
Let $c \mid N$, by Lemma \ref{reducer}, if $a/c$ and $a'/c$ are equivalent cusps of $\Gamma_0(N)$ and $(\epsilon,\psi) \in \mathcal{E}(k,N,\chi)$ with $M \mid c$ then $ \psi(a)[0]_{a/c} f =\psi(a')[0]_{a'/c} f$. 
Therefore in applications of Theorem \ref{mainth} computing $ \psi(a)[0]_{a/c} f$ at a set of inequivalent cusps will be sufficient, see \cite[Corollary 6.3.23]{cohenbook} for a description of such a set.
\end{remark}

Theorem \ref{mainth} agrees with and extends previously known formulas. For example, if we let $k \in \nn$ even, $N$ squarefree, $\chi=\chi_1$ in Theorem \ref{mainth}, we obtain \cite[Theorem 1.1]{sqfreepaper} and if we let $k \in \nn$ odd $N \in \{3,7,11,23 \}$, $\chi=\chi_{-N}$ in Theorem \ref{mainth}, we obtain \cite[(11.20)]{cooperbook}. Theorem \ref{mainth} additionally extends the latter to hold for all primes $N$ that are congruent to $3$ modulo $4$. 

Before we apply Theorem \ref{mainth} to representation numbers of quadratic forms we give a snapshot of interesting applications. Since $f(z) -E_f(z)$ is a cusp form, one can use our main theorem to produce cusp forms. At the end of Section \ref{classical} we use this idea combined with the Modularity Theorem (\cite[Theorem 8.8.1]{DiamondShurman}) and consider the elliptic curve $E_{27A}: y^2  + y=x^3 - 7$. Then we use arithmetic properties of Eisenstein series to obtain
\begin{align*}
\# E_{27A}(\ff_p) & \equiv 0  \pmd{9}  \mbox{ if $p \equiv 1 \pmd{3}$,} \\
\# E_{27A}(\ff_p) & = p+1  \mbox{ if $p \equiv 2 \pmd{3}$,}
\end{align*}
where
\begin{align*}
E_{27A}(\ff_p):= \{ \infty \} \cup \{ (x,y) \in \ff_p \times \ff_p : y^2  + y=x^3 - 7 \}, 
\end{align*}
with $\ff_p$ denoting the finite field of $p$ elements, see Corollary \ref{cor2}.

The Fourier coefficients of special functions (expanded at $i \infty$) have been of huge interest. A very well studied special function is the Dedekind eta function which is defined by
\begin{align*}
\eta(z):=e^{\pi i z/12} \prod_{n \geq 1} (1-e^{2 \pi i n z})=q^{1/24} \prod_{n \geq 1} (1-q^{n}).
\end{align*}
Quotients of Dedekind eta functions are often referred to as eta quotients. Nathan Fine in his book \cite{fine} has given several formulas for Fourier coefficients of eta quotients (expanded at $i \infty$). In his work when the weight of the eta quotient is integer, the formulas are linear combinations of Eisenstein series defined above. For instance he shows that
{\begin{align*}
F(z):=\frac{\eta(2z)\eta(3z)\eta(8z)\eta(12z)}{\eta(z)\eta(24z)}=1+ \sum_{n \geq 1} \sigma_{0}(\chi_1,\chi_{-24};n) q^n,
\end{align*}}%
see \cite[(32.5)]{fine}, and acknowledges this equation as being {\it very beautiful}. We consider the $(2k+1)$th power of $F(z)$, that is we consider
\begin{align*}
F^{2k+1}(z)=\frac{\eta^{2k+1}(2z)\eta^{2k+1}(3z)\eta^{2k+1}(8z)\eta^{2k+1}(12z)}{\eta^{2k+1}(z)\eta^{2k+1}(24z)}.
\end{align*}
Using our main theorem (Theorem \ref{mainth}) we obtain the following analogous formula for $F^{2k+1}(z)$ when $k > 0$:
\begin{align*}
E_{F^{2k+1}}(z) & =1 - \frac{2k+1}{B_{{2k+1},\chi_{-24}}} \sum_{n \geq 1} \left( \sigma_{{2k}}(\chi_1,\chi_{-24};n) + (-24)^{k} \sigma_{{2k}}(\chi_{-24},\chi_1;n) \right) q^n,
\end{align*}
see Corollary \ref{cor_1}. Using Theorem \ref{mainth} one can obtain formulas in this fashion for all holomorphic eta quotients of integral weight $k \geq 2$.

Let $\mathcal{F}(x_1,\ldots,x_{2k})$ be a positive definite quadratic form with integer coefficients and $B(\mathcal{F})$ be the matrix associated with $\mathcal{F}$ whose entries are given by
\begin{align*}
B(\mathcal{F})_{i,j}=\left( \frac{\partial^2 \mathcal{F}}{\partial x_i \partial x_j}   \right).
\end{align*} 
Then the generating function of the number of representations of a positive integer by the quadratic form $\mathcal{F}$ is
\begin{align*}
\theta_\mathcal{F}(z) = \sum_{x \in \zz^{2k}}e^{ 2 \pi i z \mathcal{F}(x) } = \sum_{x \in \zz^{2k}}e^{ 2 \pi i z x B(\mathcal{F}) x^{T}/2 }.
\end{align*} 
In \cite{siegel} Siegel gave a formula for the weighted average for representation numbers of positive definite quadratic forms in the same genus. Siegel's formula is in terms of local densities, for other treatments of Siegel's formula see \cite{yang} and \cite[Chapter 3]{opitz}. In the realm of modular forms, Siegel's formula corresponds to the Eisenstein part of $\theta_\mathcal{F}(z)$, see \cite{arenas}, \cite{surveyRSP}, \cite[Remark on pg 110]{serrebook} and \cite{siegel}. For clarity we note that if $\mathcal{F}_1$ and $\mathcal{F}_2$ are in the same genus then  $E_{\theta_{\mathcal{F}_1}}(z) = E_{\theta_{\mathcal{F}_2}}(z)$. 
Below we use Theorem \ref{mainth} to give an explicit formula for $E_{\theta_{\mathcal{F}}}(z)$, where $\mathcal{F}$ is a $2k$--ary positive definite quadratic form with integer coefficients. In Section \ref{quad} we give several applications of our formula including a comparison of our output for the form $\sum_{j=1}^{2k} x_j^2$ with that of Arenas \cite[Proposition 1]{arenas}, which uses Siegel's formula. 

By \cite[Corollary 4.9.5]{miyake}, we have
\begin{align}
\theta_\mathcal{F}(z) \in M_{k}(\Gamma_0(N),\chi), \label{4_1}
\end{align}
where $\ds \chi=\dqu{(-1)^k \det(B(\mathcal{F}))}{*}$ and $N$ is the smallest positive integer such that the matrix $NB(\mathcal{F})^{-1}$ has even diagonal entries. By \cite[(10.2)]{wangpei} we have
{\begin{align}
[0]_{a/c} \theta_\mathcal{F}(z)= \left( \frac{-i}{c} \right)^k \frac{1}{\sqrt{\det(B(\mathcal{F}))}} \sum_{\substack{x \in \zz^{2k},\\ x \pmd{c}}} e^{ 2 \pi i (\mathcal{F}(x) a/c)}. \label{4_2}
\end{align}}%
Putting \eqref{eis}, \eqref{4_1} and \eqref{4_2} in Theorem \ref{mainth}, we obtain the following assertion concerning the representation numbers of $2k$--ary quadratic forms.
\begin{theorem} \label{thetamain} Let $\mathcal{F}(x_1,\ldots,x_{2k})$ be a positive definite quadratic form with $k \geq 2$; let $\chi$ and $N$ be as above and $\omega$ be as in \eqref{eis}. Then 
\begin{align}
[n] E_{\theta_\mathcal{F}}(z) & = \sum_{(\epsilon, \psi) \in \mathcal{E}(k,N,\chi) } \left(\frac{\mathcal{M}_\omega }{M } \right)^k  \left( \frac{ W(\overline{\psi})}{W(\omega)} \right)\left( \frac{-2k}{B_{k,\overline{\omega}}} \right) \prod_{p \mid \lcm(L,M)} \frac{p^k}{p^k-\omega(p)}  \label{4_3}\\
& \qquad \qquad \qquad \times \sum_{d \mid N/LM} a_{\theta_\mathcal{F}}(\epsilon,\psi,d)  \sigma_{k-1}(\epsilon, {\psi}; n/d) , \nonumber
\end{align}
where
\begin{align*}
& a_{\theta_\mathcal{F}}(\epsilon,\psi,d) =  \frac{(-i)^k}{\sqrt{\det(B(\mathcal{F}))}} \prod_{p \mid N} \frac{p^k}{p^k - \epsilon(p) \overline{\psi}(p)}  \\
& \qquad \times \sum_{c \in C_{N}(\epsilon,\psi) } \frac{ \mathcal{R}_{k,\epsilon,\psi}(d,c/M) \mathcal{S}_{k,N/LM,\epsilon,\psi}(d,c/M)}{c^k \phi(c)} \sum_{ \substack{a=1,\\ \gcd(a,c)=1}}^c  {\psi}(a) \sum_{\substack{ x \in \zz^{2k},\\ x \pmd{c}}} e^{ 2 \pi i (\mathcal{F}(x) a/c)}.
\end{align*}
\end{theorem}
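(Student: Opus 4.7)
The proof is a direct substitution: I plug the three specific facts we have about theta series, namely \eqref{4_1}, \eqref{4_2} and the $q$-expansion \eqref{eis} of $E_k(\epsilon,\psi;dz)$, into the general Main Theorem \ref{mainth} and read off the $n$-th Fourier coefficient. By \eqref{4_1}, $\theta_\mathcal{F}(z)\in M_k(\Gamma_0(N),\chi)$, so Theorem \ref{mainth} applies and yields
\begin{align*}
E_{\theta_\mathcal{F}}(z)=\sum_{(\epsilon,\psi)\in \mathcal{E}(k,N,\chi)}\sum_{d\mid N/LM} a_{\theta_\mathcal{F}}(\epsilon,\psi,d)\,E_k(\epsilon,\psi;dz),
\end{align*}
where $a_{\theta_\mathcal{F}}(\epsilon,\psi,d)$ is the Main Theorem's formula evaluated at $f=\theta_\mathcal{F}$.

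Next I would compute the cusp average $[0]_{c,\psi}\theta_\mathcal{F}$ explicitly. Expanding by \eqref{newcusp} and substituting \eqref{4_2} (the prefactor $(-i/c)^k/\sqrt{\det(B(\mathcal{F}))}$ being independent of $a$) gives
\begin{align*}
[0]_{c,\psi}\theta_\mathcal{F}=\frac{(-i)^k}{c^k\phi(c)\sqrt{\det(B(\mathcal{F}))}}\sum_{\substack{a=1,\\ \gcd(a,c)=1}}^c \psi(a)\sum_{\substack{x\in\zz^{2k},\\ x\pmd{c}}} e^{2\pi i \mathcal{F}(x)a/c}.
\end{align*}
Inserting this into the formula of Theorem \ref{mainth} and pulling $(-i)^k/\sqrt{\det(B(\mathcal{F}))}$ outside the sum over $c$ produces precisely the claimed expression for $a_{\theta_\mathcal{F}}(\epsilon,\psi,d)$.

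To extract the $n$-th Fourier coefficient for $n\geq 1$, I then apply \eqref{eis}. The constant term $\epsilon(0)$ of $E_k(\epsilon,\psi;dz)$ contributes nothing for $n\geq 1$, and the $n$-th coefficient of the $q$-series portion is the global normalization constant
\begin{align*}
\left(\frac{\mathcal{M}_\omega}{M}\right)^k\!\left(\frac{W(\overline{\psi})}{W(\omega)}\right)\!\left(\frac{-2k}{B_{k,\overline{\omega}}}\right)\prod_{p\mid\lcm(L,M)}\!\frac{p^k}{p^k-\omega(p)}
\end{align*}
times $\sigma_{k-1}(\epsilon,\psi;n/d)$, with the convention that this vanishes when $d\nmid n$. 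Summing over $(\epsilon,\psi,d)$ and factoring the above constant outside the $d$-sum then reproduces \eqref{4_3}.

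The only obstacle is bookkeeping: I must confirm that the factor $c^k\phi(c)$ in the denominator of $a_{\theta_\mathcal{F}}(\epsilon,\psi,d)$ arises from combining $(1/c)^k$ in \eqref{4_2} with $1/\phi(c)$ in \eqref{newcusp}, and that the normalization constants from \eqref{eis} have been correctly routed outside the inner $d$-sum rather than being absorbed into $a_{\theta_\mathcal{F}}(\epsilon,\psi,d)$. Since no modular-forms input beyond Theorem \ref{mainth}, \eqref{4_1}, and \eqref{4_2} is needed, no genuinely hard step is anticipated.
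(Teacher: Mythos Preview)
Your proposal is correct and follows exactly the paper's own argument: the paper simply states that Theorem~\ref{thetamain} is obtained by ``Putting \eqref{eis}, \eqref{4_1} and \eqref{4_2} in Theorem~\ref{mainth}'', which is precisely the substitution you carry out. Your additional bookkeeping checks (tracking the $c^k\phi(c)$ denominator and the normalization constants from \eqref{eis}) are accurate and make the one-line justification in the paper fully explicit.
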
%

The organization of the rest of the paper is as follows. In Section \ref{quad}, we apply  Theorem \ref{thetamain} to the representation numbers of diagonal quadratic forms and certain non-diagonal level $2$ quadratic forms. A special case of the latter leads to an equation for Ramanujan's tau function. In Section \ref{classical}, we apply our Main Theorem to certain families of eta quotients, these applications give extensions of some well known formulas to higher weight eta quotients. In Sections \ref{sec_orth}--\ref{sec_proof} we prove the main theorem. 

\section{Applications to representation numbers of certain quadratic forms} \label{quad}

To apply \eqref{4_3} to specific quadratic forms we need to compute the quadratic Gauss sum. If $\mathcal{F}$ is a diagonal form, say $\mathcal{F}=\sum_{j=1}^{2k} \alpha_j x_j^2$, then we have
{ \begin{align}
\sum_{\substack{ x \in \zz^{2k},\\ x \pmd{c}}} e^{2 \pi i \mathcal{F}(x)a/c} =  \prod_{j=1}^{2k} \gcd(\alpha_j a,c) g\left( \frac{\alpha_j a}{\gcd(\alpha_j a,c)},\frac{c}{\gcd(\alpha_j a,c)} \right), \label{4_4} 
\end{align}}%
where, if $\gcd(\alpha, \beta)=1$,
\begin{align*}
g(\alpha, \beta)= \begin{cases}  0 & \mbox{ if $\beta \equiv 2 \pmd{4}$,} \\
 \dqu{\alpha}{\beta} \sqrt{\beta}  & \mbox{ if $\beta \equiv 1 \pmd{4}$,}\\
i  \dqu{\alpha}{\beta} \sqrt{\beta} & \mbox{ if $\beta \equiv 3 \pmd{4}$,}\\
 (1+i)  \dqu{\beta}{\alpha} \sqrt{\beta} & \mbox{ if $\beta \equiv 0 \pmd{4}$ and $\alpha \equiv 1 \pmd{4}$,}\\
(1- i) \dqu{\beta}{\alpha} \sqrt{\beta} & \mbox{ if $\beta \equiv 0 \pmd{4}$ and $\alpha \equiv 3 \pmd{4}$,}
 \end{cases}
 \end{align*}
see \cite[Theorems 1.5.2 and 1.5.4]{kennethbook}. Next we apply this result to the form $\mathcal{F}=\sum_{j=1}^{2k} x_j^2$, that is, $\alpha_j=1$ for all $1 \leq j \leq 2k$. Then we have
\begin{align*}
\sum_{\substack{ x \in \zz^{2k},\\ x \pmd{c}}} e^{2 \pi i \mathcal{F}(x)/c}=\prod_{i=1}^{2k}  g\left( 1,c  \right) = \begin{cases}
1 & \mbox{if $c=1$,}\\
0 & \mbox{if $c=2$,}\\
(8i)^{k} & \mbox{if $c=4$.} \end{cases}
\end{align*}
Thus by Theorem \ref{thetamain} when $k$ is even we have 
{ \begin{align}
E_{\theta_{\mathcal{F}}}(z) & = 1-  \frac{2k}{(2^k-1)B_{k,\chi_1}} \sum_{n \geq 1} \left( (-i)^k \sigma(\chi_1,\chi_1;n) - (i^k+1) \sigma(\chi_1,\chi_1;n/2) \right. \nonumber \\
& \qquad \qquad\qquad\qquad \qquad\qquad \left. + 2^k \sigma(\chi_1,\chi_1;n/4)  \right) q^n \label{eqm:1}
\end{align}}%
and when $k$ is odd we have
\begin{align}
E_{\theta_{\mathcal{F}}}(z)= 1- \frac{2k}{B_{k,\chi_{-4}}} \sum_{n \geq 1} \left( \sigma(\chi_1,\chi_{-4};n) + (2i)^{k-1} \sigma(\chi_{-4},\chi_1;n)  \right) q^n. \label{eqm:2}
\end{align}
\eqref{eqm:1} and \eqref{eqm:2} agrees with Ramanujan's statements \cite[(131)--(134)]{ramanujan}, which was first proven by Mordell in \cite{mordell}. In \cite[Proposition 1]{arenas} Arenas uses Siegel's formula  to compute $E_{\theta_{\mathcal{F}}}(z)$ and obtains \eqref{eqm:1} and \eqref{eqm:2} in the same form. Now we turn our attention to another diagonal form. Let 
\begin{align*}
\mathcal{F}(a,b;p)=\sum_{i=1}^{a} x_i^2 + \sum_{i=1}^{b} p y_i^2.
\end{align*}
In \cite{cooperrmf} Cooper, Kane and Ye found formulas for the representation numbers of $\mathcal{F}(k,k;p)$, where $p=3,7,11$ or $23$. Their result relies on the existence of a Hauptmodul in the levels considered. Inspired by their results, in \cite{rmfpaper}, we derived formulas for the representation numbers of $\mathcal{F}(2a,2b;p)$ where $a,b \in \nn_0$ and $p$ is an odd prime. These results are considered as analogues of the Ramanujan-Mordell formula and specialized version of Theorem \ref{thetamain} agrees with these results. Below we give formulas in all the remaining cases, that is, we find formulas for representation numbers of $\mathcal{F}(a,b;p)$ where $a,b \equiv 1 \pmod{2}$ and $p$ an odd prime. 

\begin{corollary} \label{cor_4_1}
Let $a,b \geq 1$ be odd integers such that $a+b \geq 4$. Set $k=(a+b)/2$ and $\mathbf{p}=\chi_{-4}(p) p$. Then for any odd prime $p$, whenever $(-1)^k = \chi_{-4}(p)$ we have
{\begin{align*}
& E_{\theta_{\mathcal{F}(a,b;p)}}(z) =  \\
& 1 +  \sum_{n =1}^{\infty} \frac{2k \left( a_1 \sigma_{k-1}(\chi_1,\chi_{\mathbf{p}}; n) + a_2 \sigma_{k-1}(\chi_1,\chi_{\mathbf{p}}; n/2) + a_3 2^{k} \sigma_{k-1}(\chi_1,\chi_{\mathbf{p}}; n/4) \right)}{({2^k - \chi_{\mathbf{p}}(2)}) B_{k,\chi_{\mathbf{p}}} } q^{ n }\\
& +   \sum_{n =1}^{\infty} \frac{p^{(a-1)/2} 2k\left( a_4 \sigma_{k-1}(\chi_{\mathbf{p}},\chi_{1}; n) +  a_5 \sigma_{k-1}(\chi_{\mathbf{p}},\chi_{1}; n/2) +  a_6 2^k \sigma_{k-1}(\chi_{\mathbf{p}},\chi_{1}; n/4)  \right)}{(2^k - \chi_{\mathbf{p}}(2)) B_{k,\chi_{\mathbf{p}}}} q^{n},
\end{align*} }%
and whenever $(-1)^k = -\chi_{-4}(p)$ we have
\begin{align*}
E_{\theta_{\mathcal{F}(a,b;p)}}(z)  &=  1- \sum_{n =1}^{\infty} \frac{2k\left( b_1 \sigma_{k-1}(\chi_{1},\chi_{-4\mathbf{p}}; n) +   b_2 2^k \sigma_{k-1}(\chi_{-4},\chi_{\mathbf{p}}; n) \right)}{B_{k,\chi_{-4\mathbf{p}}}} q^{n}  \\
&  -   \sum_{n =1}^{\infty} \frac{p^{(a-1)/2} 2k\left( b_3 \sigma_{k-1}(\chi_{\mathbf{p}},\chi_{-4}; n) + b_4 2^k  \sigma_{k-1}(\chi_{-4\mathbf{p}},\chi_{1}; n)  \right)}{B_{k,\chi_{-4\mathbf{p}}}} q^{n}
\end{align*}
where 
\begin{align*}
a_1 & = \begin{cases}
(-1)^{k/2} & \mbox{ if $ p \equiv 1 \pmd{4} $,}\\
(-1)^{(k+a+2)/2} & \mbox{ if $ p \equiv 3 \pmd{4} $,}
\end{cases}\\
a_2 & = \begin{cases}
(-1)^{k/2+1}-\chi_{\mathbf{p}}(2)  & \mbox{ if $ p \equiv 1 \pmd{4} $,}\\
(-1)^{(k+a)/2} - \chi_{\mathbf{p}}(2)& \mbox{ if $ p \equiv 3 \pmd{4} $,}
\end{cases}\\
a_3 & = 1,\\
a_4 & = \begin{cases}
(-1)^{k/2}   & \mbox{ if $ p \equiv 1 \pmd{4} $,}\\
(-1)^{(k-1)/2} & \mbox{ if $ p \equiv 3 \pmd{4} $,}
\end{cases}\\
a_5 & = \begin{cases}
(-1)^{k/2+1} \chi_{\mathbf{p}}(2) - 1   & \mbox{ if $ p \equiv 1 \pmd{4} $,}\\
(-1)^{(b+1)/2}+(-1)^{(k+1)/2}\chi_{\mathbf{p}}(2) & \mbox{ if $ p \equiv 3 \pmd{4} $,}
\end{cases}\\
a_6 & = \begin{cases}
1 & \mbox{ if $ p \equiv 1 \pmd{4} $,}\\
(-1)^{(b-1)/2} & \mbox{ if $ p \equiv 3 \pmd{4} $,}
\end{cases}\\
b_1 & = 1,\\
b_2 & = \begin{cases}
{(-1)^{(k-1)/2}}/{2} & \mbox{ if $ p \equiv 1 \pmd{4} $,}\\
{(-1)^{(k+a-1)/2}}/{2} & \mbox{ if $ p \equiv 3 \pmd{4} $,}
\end{cases}\\
b_3 & = \begin{cases}
1 & \mbox{ if $ p \equiv 1 \pmd{4} $,}\\
(-1)^{(b+1)/2} & \mbox{ if $ p \equiv 3 \pmd{4} $,}
\end{cases}\\
b_4 & = \begin{cases}
(-1)^{(k-1)/2}/2 & \mbox{ if $ p \equiv 1 \pmd{4} $,}\\
(-1)^{k/2}/2 & \mbox{ if $ p \equiv 3 \pmd{4} $.}
\end{cases}
\end{align*}

\end{corollary}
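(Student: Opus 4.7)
The plan is to specialize Theorem \ref{thetamain} to the diagonal form $\mathcal{F}(a,b;p)$. Since $B(\mathcal{F})$ is diagonal with $a$ entries equal to $2$ and $b$ entries equal to $2p$, we have $\det(B(\mathcal{F}))=2^{2k}p^b$, and because $b$ is odd the associated character is $\chi=\chi_{(-1)^k p}$, which equals $\chi_{\mathbf{p}}$ when $(-1)^k=\chi_{-4}(p)$ and $\chi_{-4\mathbf{p}}$ otherwise. Requiring $NB(\mathcal{F})^{-1}$ to have even diagonal entries forces $N=4p$, so the cusp set $C_{4p}(\epsilon,\psi)$ runs over $c_1 M$ with $c_1\mid 4p/LM$.

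Next I would enumerate the pairs $(\epsilon,\psi)\in\mathcal{E}(k,4p,\chi)$. In the first case $\chi=\chi_{\mathbf{p}}$ has conductor $p$, so only $(\chi_1,\chi_{\mathbf{p}})$ and $(\chi_{\mathbf{p}},\chi_1)$ are admissible, each with $LM=p$ and $d\in\{1,2,4\}$, producing the six contributions in the statement. In the second case $\chi=\chi_{-4\mathbf{p}}$ has conductor $4p$, forcing $LM=4p$ and $d=1$, and the four admissible pairs are $(\chi_1,\chi_{-4\mathbf{p}})$, $(\chi_{-4\mathbf{p}},\chi_1)$, $(\chi_{-4},\chi_{\mathbf{p}})$, and $(\chi_{\mathbf{p}},\chi_{-4})$. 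The parity condition $\epsilon(-1)\psi(-1)=(-1)^k$ is then verified directly in each of the four subcases determined by $p\bmod 4$ and the parity of $k$.

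The bulk of the work is to evaluate $a_{\theta_\mathcal{F}}(\epsilon,\psi,d)$. For each cusp $c\in C_{4p}(\epsilon,\psi)$, identity \eqref{4_4} expresses the quadratic Gauss sum $\sum_{x\pmd{c}}e^{2\pi i\mathcal{F}(x)a/c}$ as a product of scalar Gauss sums $g(\alpha,\beta)$ with $\alpha\in\{a,pa\}$ and $\beta$ ranging over the divisors of $c$. Substituting into the averaged constant term $[0]_{c,\psi}\theta_\mathcal{F}$ from \eqref{newcusp} and summing $\psi(a)$ against $a\in(\zz/c\zz)^\times$ reduces matters to explicit character sums modulo divisors of $4p$, which evaluate via orthogonality and the standard evaluation of $\left(\tfrac{\cdot}{p}\right)$. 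Combining these with the prefactors $(\mathcal{M}_\omega/M)^k$ and $W(\overline\psi)/W(\omega)$, the local Euler product in \eqref{eis}, the factors $\mathcal{R}$ and $\mathcal{S}$ from \eqref{RR}--\eqref{SS}, and the normalization $1/\sqrt{\det B(\mathcal{F})}=1/(2^k p^{b/2})$ assembles the stated formulas.

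The main obstacle, and the reason for the case split in the coefficients $a_i$ and $b_i$, is tracking the eighth roots of unity in $g(\alpha,\beta)$ when $4\mid\beta$. The factors $(1\pm i)$ arising from the cusps with $c\in\{4,4p\}$ combine with $(-i)^k$ from \eqref{4_2} and with the half-integer power of $p$ absorbed by $W(\chi_{\mathbf{p}})$ (up to a sign depending on $\mathbf{p}\bmod 4$) to produce the rational coefficients carrying the extracted factor $p^{(a-1)/2}$; the dependence on $p\bmod 4$ propagates into every $a_i$ and $b_i$ through the values of $\chi_{\mathbf{p}}(2)$ and the sign of $\mathbf{p}$. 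As a consistency check I would compare the output against the even-indexed special cases treated in \cite{rmfpaper} and the Cooper--Kane--Ye formulas \cite{cooperrmf}, which correspond to $a=b$ at the primes $p\in\{3,7,11,23\}$.
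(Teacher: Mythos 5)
Your proposal follows exactly the route the paper takes (and leaves mostly implicit): specialize Theorem \ref{thetamain} to the diagonal form, compute $\det(B(\mathcal{F}))=2^{2k}p^b$, $N=4p$, and $\chi=\chi_{(-1)^kp}$, enumerate the admissible pairs in $\mathcal{E}(k,4p,\chi)$ in the two cases, and evaluate the quadratic Gauss sums at the cusps via \eqref{4_4}. The structural setup is correct and matches the paper's intended derivation; the remaining work is the same mechanical evaluation of the coefficients $a_i$, $b_i$ that the paper also omits.
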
%

Using \eqref{4_4} and Theorem \ref{thetamain} one can obtain results similar to Corollary \ref{cor_4_1} for any diagonal form. Next we consider the non-diagonal form
\begin{align*}
\mathcal{F}_k= \sum_{m=1}^{k} \left( \sum_{1 \leq i \leq j \leq 4} x_{i,m} x_{j,m} \right) - x_{1,m} x_{2,m}.
\end{align*}
We obtain
\begin{align*}
& N=2, \qquad \det(B(\mathcal{F}_k))=2^{2k} \mbox{ and } \sum_{\substack{ x \in \zz^{2k},\\ x \pmd{c}}} e^{ 2 \pi i \mathcal{F}_k(x)/c}=\begin{cases}
1 & \mbox{ if $c=1$,}\\
(-8)^k & \mbox{ if $c=2$}. \end{cases}
\end{align*}
Thus $\theta_{\mathcal{F}_k} \in M_{2k}(\Gamma_0(2),\chi_1)$, hence by Theorem \ref{thetamain} we have
{ \begin{align}
[n]E_{\theta_{\mathcal{F}_k}}(z) = \frac{-4k}{((-2)^k+1)B_{2k,\chi_1}} \left( \sigma_{2k-1}(\chi_1,\chi_1;n) + (-2)^k \sigma_{2k-1}(\chi_1,\chi_1;n/2)  \right). \label{6_1}
\end{align}}%
When $k=6$ we compute the first few coefficients of the cusp part of $\theta_{\mathcal{F}_6}$:
{\begin{align*}
\theta_{\mathcal{F}_6}(z) - E_{\theta_{\mathcal{F}_6}}(z)= \frac{2^6 3^4 19}{691}q+ \frac{2^6 3^4 19 }{691} (2^6 -24) q^2+ \frac{2^6 3^419 }{691} 252 q^3 +O(q^4) \in S_{12}(\Gamma_0(2),\chi_1).
\end{align*}}
The Fourier coefficients of $\eta^{24}(z)$ are called the Ramanujan's $\tau$ function and first few terms are given as follows
\begin{align}
\eta^{24}(z)=\sum_{n \geq 1} \tau(n) q^n = q-24 q^2+252 q^3 +O(q^4). \label{RINT6_2}
\end{align}
It is well known that $\eta^{24}(z)$ and $\eta^{24}(2z) \in S_{12}(\Gamma_0(2),\chi_1)$ thus by Sturm Theorem \cite[Corollary 5.6.14]{cohenbook} we obtain
\bal
\theta_{\mathcal{F}_6}(z) - E_{\theta_{\mathcal{F}_6}}(z)=  \frac{2^6 3^4 19}{691}(\eta^{24}(z) + 2^6 \eta^{24}(2z)). \label{rama1}
\nal
If we compare $n$th coefficient of both sides of \eqref{rama1} we get
\begin{align*}
 [n]\theta_{\mathcal{F}_6}(z)-\frac{2^4 3^2 7}{691}  (\sigma_{11}(\chi_1,\chi_1;n) + 2^6 \sigma_{11}(\chi_1,\chi_1;n/2)) =  \frac{2^6 3^4 19}{691}  (\tau(n) + 2^6 \tau(n/2)).
\end{align*}
Since 
\begin{align*}
& [n]\theta_{\mathcal{F}_6}(z) \in \nn_0 \mbox{ for all $n \in \nn_0$ and } 2^4 3^2 7 \equiv -2^6 3^4 19 \pmd{691}
\end{align*}
it is not hard to deduce the well known congruence relation
\begin{align*}
\tau(n) \equiv \sigma_{11}(\chi_1,\chi_1;n) \pmd{691}.
\end{align*}

\section{Applications to eta quotients} \label{classical}

In this section we give further applications of Theorem \ref{mainth}. Recall that the Dedekind eta function is defined by
\begin{align*}
\eta(z)=e^{\pi i z/12} \prod_{n \geq 1} (1-e^{2 \pi i n z}).
\end{align*}
Let $k \in \nn$. We define
{\begin{align}
f_{k}(z)& :=\frac{\eta^{2k+1}(2z)\eta^{2k+1}(3z)\eta^{2k+1}(8z)\eta^{2k+1}(12z)}{\eta^{2k+1}(z)\eta^{2k+1}(24z)}, \label{eqr_11} \\
g_{k}(z) & :=\frac{\eta^{6{k}-5}(3z)\eta^{6{k}-4}(4z)}{\eta^{{2k}-3}(z)\eta^{{2k}-2}(2z)\eta^{{2k}-4}(6z)\eta^{{2k}}(12z)} \label{eqr_2}, \\ 
h_{k}(z) & :=\frac{\eta^{6{k}-4}(9z) \eta^{3}(27z)}{\eta^{{2k}-1}(3z)}. \label{eqr_3}
\end{align}}%
In Corollary \ref{cor_1} below we obtain formulas concerning $f_k(z), g_k(z)$ and $h_k(z)$. Similar formulas can be obtained via Theorem \ref{mainth} for all integer weight holomorphic eta quotients.

\begin{corollary} \label{cor_1} Let $k \geq 1$ and let $f_k(z), g_k(z)$ and $h_k(z)$ be defined by \eqref{eqr_11}, \eqref{eqr_2}, and \eqref{eqr_3}, respectively. Then we have
{\begin{align*}
E_{f_{k}}(z) & =1 - \frac{4k+2}{B_{{2k+1},\chi_{-24}}} \sum_{n \geq 1} \left( \sigma_{{2k}}(\chi_1,\chi_{-24};n) + (-24)^{{k}} \sigma_{{2k}}(\chi_{-24},\chi_1;n) \right) q^n, \\
E_{g_{k}}(z) & = 1- \frac{4{k}}{B_{{2k},\chi_{12}}} \sum_{n \geq 1}  \sigma_{{2k}-1}(\chi_{1},\chi_{12};n) q^n, 
\end{align*}
and
\begin{align}
E_{h_{k}}(z) & = \frac{-4{k}}{B_{k,{\chi_{1}}}} \sum_{n \geq 1} \left(  \sum_{d \mid 9} a_d  \sigma_{{2k}-1}(\chi_{1}, \chi_{1}; n/d) +    b_1\sigma_{{2k}-1}(\chi_{-3}, \chi_{-3}; n) \right) q^n, \label{eqr_4}
\end{align}}%
where
{\begin{align*}
a_1 & =  \frac{(-1)^{{k}} - \cos \left( {({k}+4) \pi}/{3} \right)}{3^{3{k}+1} (3^{2k} - 1)},   \\
a_3  & = \frac{(-1)^{{k_2}/2+1} +(3^{2k} +1) \cos \left( {({k}+4) \pi}/{3} \right)}{3^{3{k}+1} (3^{2k} - 1)},  \\
a_9  & =  \frac{- \cos \left( {({k}+4) \pi}/{3} \right)}{3^{{k}+1} (3^{2k} - 1)},   \\
b_1  &  =  \frac{ \sqrt{3}  \sin \left( {({k}+4) \pi}/{3} \right)}{3^{3{k}+1} (3^{2k}-1)}.
\end{align*}}%
\end{corollary}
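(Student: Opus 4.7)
The plan is to apply Theorem \ref{mainth} to each of $f_k$, $g_k$, $h_k$ separately, the three key ingredients being (i) identification of the weight, level and character, (ii) explicit computation of the constant terms $[0]_{a/c}$ at all cusps, and (iii) assembling the data via \eqref{newcusp} and the formula for $a_f(\epsilon,\psi,d)$.

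First I would verify modularity and identify the data. A short exponent count (sum of numerator minus denominator exponents) shows that $f_k$ has weight $2k+1$ and $g_k$, $h_k$ have weight $2k$. Using Ligozat's criteria for eta quotients, $f_k\in M_{2k+1}(\Gamma_0(24),\chi_{-24})$, $g_k\in M_{2k}(\Gamma_0(12),\chi_{12})$, and $h_k\in M_{2k}(\Gamma_0(27),\chi_1)$. In each case the set $\mathcal{E}(k,N,\chi)$ is small and explicit: for $f_k$ one enumerates primitive pairs $(\epsilon,\psi)$ with $\epsilon\psi=\chi_{-24}$ and $LM\mid 24$; for $g_k$ pairs with $\epsilon\psi=\chi_{12}$ and $LM\mid 12$; for $h_k$ pairs with $\epsilon\psi=\chi_1$ and $LM\mid 27$, so $(\chi_1,\chi_1)$ and $(\chi_{-3},\chi_{-3})$ are the only possibilities, matching the two families of divisor sums in \eqref{eqr_4}.

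The main computational step is to evaluate $[0]_{a/c}$ at every cusp $a/c$ of $\Gamma_0(N)$. I would use the formula
\begin{align*}
\mathrm{ord}_{a/c}\!\left(\prod_{\delta\mid N}\eta(\delta z)^{r_\delta}\right)=\frac{N}{24\gcd(c^2,N)}\sum_{\delta\mid N}\frac{\gcd(c,\delta)^2\, r_\delta}{\delta}
\end{align*}
to detect which cusps give a nonzero constant term; at the remaining cusps the eta quotient has a zero, so $[0]_{a/c}=0$. At the non-vanishing cusps, the transformation law $\eta\bigl(\tfrac{az+b}{cz+d}\bigr)=\varepsilon(a,b,c,d)(-i(cz+d))^{1/2}\eta(z)$ with an explicit $24$th root of unity $\varepsilon$ (depending on Dedekind sums) yields a closed formula for $[0]_{a/c}$. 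The averaged constant term $[0]_{c,\psi}f$ defined in \eqref{newcusp} is then obtained by summing $\psi(a)[0]_{a/c}$ over $a$ coprime to $c$; since the $\eta$-multipliers factor in an $a$-dependent way that is periodic modulo $24$ (respectively modulo $3$ for $h_k$), this sum reduces to a finite character sum closely related to $W(\psi)$ or $W(\chi_{-3})$.

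Once the averages $[0]_{c,\psi}f$ are in hand, the formula for $a_f(\epsilon,\psi,d)$ in Theorem \ref{mainth} is a finite sum whose factors $\mathcal{R}_{k,\epsilon,\psi}$ and $\mathcal{S}_{k,N/LM,\epsilon,\psi}$ are given explicitly by \eqref{RR} and \eqref{SS}. Substituting into \eqref{RNT1} and then expanding the Eisenstein series through \eqref{eis}, I would simplify by evaluating the Gauss sums $W(\overline{\psi})$, $W(\omega)$ and the generalized Bernoulli numbers at the small conductors involved (all of which are standard: $W(\chi_{-4})=2i$, $W(\chi_{-3})=i\sqrt{3}$, etc.). This collapses the double sum over $(\epsilon,\psi)$ and $d$ to the compact form announced in the statement.

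The main obstacle will be the bookkeeping for the eta multiplier system at each cusp. In particular, for $h_k$ at level $27$ the relevant cusps are $1/1, 1/3, 1/9, 1/27$, and the $a$-dependence of the multiplier at $1/3$ and $1/9$ brings in nontrivial cube roots of unity. Averaging against $\chi_{-3}(a)$ produces the $\sqrt{3}\sin((k+4)\pi/3)$ factor in $b_1$, while averaging against the trivial character produces the $\cos((k+4)\pi/3)$ factors in $a_1,a_3,a_9$; managing signs and the denominator $3^{3k+1}(3^{2k}-1)$ (which comes from the local factor $p^k/(p^k-\epsilon(p)\overline{\psi}(p))$ at $p=3$) is the most delicate part of the proof. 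The cases of $f_k$ and $g_k$ are analogous but cleaner, since at level $24$ (resp.\ $12$) the relevant multipliers are $\pm 1$ or simple $8$th roots of unity absorbed by the Gauss sum factors.
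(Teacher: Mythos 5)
Your proposal follows essentially the same route as the paper: identify the spaces $M_{2k+1}(\Gamma_0(24),\chi_{-24})$, $M_{2k}(\Gamma_0(12),\chi_{12})$, $M_{2k}(\Gamma_0(27),\chi_1)$ via the standard eta-quotient criteria, compute the constant terms at a set of inequivalent cusps from the vanishing-order formula and the eta multiplier system (the paper does this via K\"ohler's Proposition 2.1, implemented in SAGE), form the averages $[0]_{c,\psi}f$, and feed everything into Theorem \ref{mainth} and \eqref{eis}. The only difference is that you carry out the multiplier bookkeeping by hand where the paper automates it, which is not a genuinely different argument.
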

\begin{proof}
We use \cite[Proposition 5.9.2]{cohenbook} to determine
\bals
f_k(z) & \in M_{2k+1}(\Gamma_0(24),\chi_{-24}),\\
g_k(z) & \in M_{2k}(\Gamma_0(12),\chi_{12}),\\
h_k(z) & \in M_{2k}(\Gamma_0(27),\chi_{1}).
\nals
We evaluate the constant terms of $f_k(z),g_k(z),h_k(z)$ at the relevant cusps using \cite[Proposition 2.1]{Kohler}. We do this with the help of some SAGE functions we have written, the code is provided in the Appendix \ref{appa}. From these we compute
\bals
& [0]_{1/1} f_k =- \frac{i^{2k+1} \sqrt{6}}{3^{k+1} 2^{3k+2}},\\
& [0]_{a/c}f_k =0, \mbox{ for $a/c= 1/2,1/3,1/4,1/6,1/8,1/12$}, \\
& [0]_{1/24}f_k =1,
\nals
see Appendix \ref{appa} for details. We determine the set of tuples of characters as
\bals
\mathcal{E}(2k+1,24,\chi_{-24}) = \{ (\chi_{1},\chi_{-24}), (\chi_{-4},\chi_{6}), (\chi_{6},\chi_{-4}),   (\chi_{-24},\chi_{1})\}.
\nals
Thus we have
\bals
E_{f_{k}}(z) = \sum_{(\epsilon, \psi) \in \mathcal{E}(2k+1,24,\chi_{-24}) } a_{f_k}(\epsilon,\psi,1) E_{2k+1}(\epsilon,\psi;z).
\nals
Now we compute
\bal
a_{f_k}(\chi_{1},\chi_{-24},1) & = \left( \prod_{p \mid 24} \frac{p^k}{p^k -\chi_{1}(p)  \chi_{-24}(p)}  \right)\mathcal{R}_{k,\chi_{1},\chi_{-24}}(1,1) \mathcal{S}_{k,1,\chi_{1},\chi_{-24}}(1,1) [0]_{24,{\chi_{-24}}}f_k \nonumber \\
 & = \mathcal{R}_{k,\chi_{1},\chi_{-24}}(1,1) \mathcal{S}_{k,1,\chi_{1},\chi_{-24}}(1,1) [0]_{24,{\chi_{-24}}}f_k \nonumber \\
 & = [0]_{24,{\chi_{-24}}}f_k. \label{eqrama1}
\nal
We further have
\bal
[0]_{24,{\chi_{-24}}}f_k = \frac{1}{\phi(24)} \sum_{\substack{a=1,\\ \gcd(a,24)=1}}^{24} \chi_{-24}(a) [0]_{a/24} f_k = \chi_{-24}(1) [0]_{1/24} f_k = 1. \label{eqrama2}
\nal
Combining \eqref{eqrama1} and \eqref{eqrama2} we have $a_{f_k}(\chi_{1},\chi_{-24},1)=1$.

The rest of the coefficients are obtained similarly.
\end{proof}
Now we turn our attention to special cases of these formulas. The dimension of $S_{2}(\Gamma_0(12),\chi_{12})$ is $0$, so we obtain an exact formula for $g_1$, i.e. we have $g_{1}(z)=E_{g_{1}}(z)$. When $k=1$, \eqref{eqr_4} specializes to
\begin{align*}
 E_{h_1}(z) & = \sum_{n \geq 1} \left(  \frac{1}{18}  \sigma_{1}(\chi_{1}, \chi_{1}; n)  -  \frac{2 }{ 9 }  \sigma_{1}(\chi_{1}, \chi_{1}; n/3) \right. \\
 & \qquad \qquad  \left. +\frac{1}{6} \sigma_{1}(\chi_{1}, \chi_{1}; n/9) +    \frac{1 }{18 } \sigma_{1}(\chi_{-3}, \chi_{-3}; n) \right) q^n.
\end{align*}
Clearly $h_1(z)-E_{h_1}(z)$ is a cusp form, and if we normalize $h_1(z)-E_{h_1}(z)$ so that the coefficient of $q$ is $1$, we obtain the newform $\mathcal{N}_{27}(z)$ in $S_{2}(\Gamma_0(27),\chi_1)$, that is, we have
{ \begin{align*}
\mathcal{N}_{27}(z) & =-9 \frac{\eta^{2}(9z) \eta^{3}(27z)}{\eta(3z)} + \sum_{n \geq 1} \left(  \frac{1}{2}  \sigma_{1}(\chi_{1}, \chi_{1}; n)  -2 \sigma_{1}(\chi_{1}, \chi_{1}; n/3) \right. \\
 &\qquad \qquad \qquad \qquad \qquad \qquad \qquad  \left.+ \frac{3}{2} \sigma_{1}(\chi_{1}, \chi_{1}; n/9)  +    \frac{1 }{2 } \sigma_{1}(\chi_{-3}, \chi_{-3}; n) \right) q^n.
\end{align*}}%
By \cite[Table 1]{cremona} this newform is associated to the elliptic curve
\begin{align*}
E_{27A}: y^2  + y=x^3 - 7.
\end{align*}
Recall that in Section \ref{sec_intro} we defined
\begin{align*}
E_{27A}(\ff_p)= \{ \infty \} \cup \{ (x,y) \in \ff_p \times \ff_p : y^2  + y=x^3 - 7 \}, 
\end{align*}
where $\ff_p$ is the finite field of $p$ elements. Then by the Modularity Theorem, see \cite[Theorem 8.8.1]{DiamondShurman}, we have
\begin{align*}
\# E_{27A}(\ff_p) =(p+1) - [p] \mathcal{N}_{27}(z)  \mbox{ for all $p \neq 3$}.
\end{align*}
Thus for all $p \neq 3$ we have
\begin{align*}
\# E_{27A}(\ff_p)  = 9 [p]\frac{\eta^{2}(9z) \eta^{3}(27z)}{\eta(3z)} + (p+1) \left( \frac{ 1- \chi_{-3}(p) }{2}  \right).
\end{align*}
Since $\ds [p]\frac{\eta^{2}(9z) \eta^{3}(27z)}{\eta(3z)} \in \zz$ for all $p \in \nn$ and $\ds [p]\frac{\eta^{2}(9z) \eta^{3}(27z)}{\eta(3z)} =0$ when $p \equiv 2 \pmod{3}$, we obtain the following statement.
\begin{corollary} \label{cor2} We have
\begin{align*}
& \# E_{27A}(\ff_p) \equiv 0 \pmd{9}  \mbox{ if $p \equiv 1 \pmd{3}$,}\\ 
& \# E_{27A}(\ff_p) = p+1  \mbox{ if $p \equiv 2 \pmd{3}$.}
\end{align*}
\end{corollary}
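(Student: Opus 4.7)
The identity displayed immediately before the corollary,
\begin{align*}
\# E_{27A}(\ff_p) = 9\,[p]\frac{\eta^{2}(9z)\eta^{3}(27z)}{\eta(3z)} + (p+1)\left(\frac{1-\chi_{-3}(p)}{2}\right),
\end{align*}
already does essentially all the work. My plan is therefore to split on the residue of $p$ modulo $3$ and verify the two facts the author cites in passing: that the eta-quotient $\eta^{2}(9z)\eta^{3}(27z)/\eta(3z)$ has integer Fourier coefficients, and that its $p$-th coefficient vanishes when $p\equiv 2\pmod 3$.

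For integrality, I would expand
\begin{align*}
\frac{\eta^{2}(9z)\eta^{3}(27z)}{\eta(3z)} = q^{(2\cdot 9 + 3\cdot 27 - 3)/24}\,\frac{\prod_{n\ge 1}(1-q^{9n})^{2}(1-q^{27n})^{3}}{\prod_{n\ge 1}(1-q^{3n})},
\end{align*}
and simplify the exponent of the leading $q$ to $4$. The product side expands as an integral power series in $q$ by inverting $\prod(1-q^{3n})$ as a geometric-series-style integer power series, so every Fourier coefficient is an integer. Moreover, since only powers $q^{3n},q^{9n},q^{27n}$ appear in the product and the prefactor is $q^{4}$, every nonzero Fourier coefficient sits at an exponent of the form $4+3k\equiv 1\pmod 3$. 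Consequently $[p]\,\eta^{2}(9z)\eta^{3}(27z)/\eta(3z)=0$ whenever $p\equiv 2\pmod 3$.

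With these two facts in hand, the corollary is immediate. If $p\equiv 1\pmod 3$, then $\chi_{-3}(p)=1$ so the second summand vanishes and $\#E_{27A}(\ff_p)=9\cdot(\text{integer})\equiv 0\pmod 9$. If $p\equiv 2\pmod 3$, then $\chi_{-3}(p)=-1$ so the second summand equals $p+1$, while the first summand vanishes by the parity argument above, giving $\#E_{27A}(\ff_p)=p+1$. The only nontrivial step is the short $q$-adic bookkeeping isolating the congruence class of the exponents; once that is done there is no further obstacle, since everything else has already been assembled in the paragraphs preceding the corollary.
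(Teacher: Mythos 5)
Your proposal is correct and follows the same route as the paper: the paper derives the same identity and then simply asserts the two facts (integrality of the coefficients of $\eta^{2}(9z)\eta^{3}(27z)/\eta(3z)$ and their vanishing for $p\equiv 2\pmod 3$) that you verify via the $q$-expansion $q^{4}\prod(1-q^{9n})^{2}(1-q^{27n})^{3}/\prod(1-q^{3n})$, whose exponents all lie in $1\pmod 3$. You have merely made explicit the bookkeeping the paper leaves implicit.
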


\section{Orthogonal relations} \label{sec_orth}

In this section we prove some orthogonal relations involving the functions $\mathcal{R}_{k,\epsilon,\psi}(d,c) $ and $ \mathcal{S}_{k,N,\epsilon,\psi}(d,c)$ defined in \eqref{RR} and \eqref{SS}, respectively. These orthogonal relations concern the constant terms of the Eisenstein series and give the means to determine $a_f(\epsilon,\psi,d)$ of Theorem \ref{mainth}. 
Throughout the section we assume $k,N \in \nn$ and $\epsilon, \psi$ are primitive Dirichlet characters with conductors $L,M$, respectively, such that $LM \mid N$.

\begin{lemma} \label{lem:3_1} Let $p \mid N$ be a prime and let $t \mid N/p^v$, where $v=v_p(N)$, then for $0\leq i \leq v$ we have
\begin{align*}
\mathcal{S}_{k,N,\epsilon,\psi}(t\cdot p^i,d)=\mathcal{S}_{k,p^v,\epsilon,\psi}(p^i,p^{v_p(d)}) \mathcal{S}_{k,N/p^v,\epsilon,\psi}(t,d/p^{v_p(d)}).
\end{align*}
\end{lemma}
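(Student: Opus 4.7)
The plan is to exploit the multiplicativity with respect to a coprime splitting of the two ingredients in the definition of $\mathcal{S}_{k,N,\epsilon,\psi}(\cdot,\cdot)$, namely the Möbius function and the Euler-type product. I will write $d = d_0 \cdot p^{v_p(d)}$ with $\gcd(d_0,p) = 1$, and note that $p \nmid t$ since $t \mid N/p^v$. Then
$$\gcd(tp^i, d) \;=\; \gcd(t, d_0) \cdot p^{\min(i,\,v_p(d))},$$
which in turn factors the argument of $\mu$ as
$$\frac{tp^i \cdot d}{\gcd(tp^i,d)^2} \;=\; \frac{t\, d_0}{\gcd(t,d_0)^2} \cdot p^{\,i + v_p(d) - 2\min(i,v_p(d))} \;=\; \frac{t\, d_0}{\gcd(t,d_0)^2} \cdot p^{\,|i - v_p(d)|}.$$
The two factors on the right are coprime (one is a power of $p$, the other is coprime to $p$), so by multiplicativity of $\mu$ the Möbius factor on the left splits as the product of $\mu\bigl(t d_0/\gcd(t,d_0)^2\bigr)$ and $\mu\bigl(p^{|i - v_p(d)|}\bigr)$, which are the Möbius factors appearing in $\mathcal{S}_{k,N/p^v,\epsilon,\psi}(t, d_0)$ and $\mathcal{S}_{k,p^v,\epsilon,\psi}(p^i, p^{v_p(d)})$, respectively.

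Next I will decompose the Euler product. The set of primes dividing $\gcd(tp^i,d)$ consists of $p$ (when $\min(i,v_p(d)) > 0$) together with the primes dividing $\gcd(t,d_0)$, and these two subsets are coprime so their contributions multiply independently. The local condition $0 < v_q(tp^i) = v_q(d) < v_q(N)$ in turn decomposes cleanly. At $q = p$ it reduces to $0 < i = v_p(d) < v$, which is exactly the condition that triggers the nontrivial local factor of $\mathcal{S}_{k,p^v,\epsilon,\psi}(p^i, p^{v_p(d)})$. At $q \neq p$ it becomes $0 < v_q(t) = v_q(d_0) < v_q(N)$; since $q \neq p$ we have $v_q(N) = v_q(N/p^v)$, so this is precisely the condition appearing in $\mathcal{S}_{k,N/p^v,\epsilon,\psi}(t, d_0)$. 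Assembling the two factorizations gives the identity.

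The only place where care is needed is the bookkeeping for the $p$-adic valuation of the Möbius argument: one must verify that $i + v_p(d) - 2\min(i, v_p(d)) = |i - v_p(d)|$, and consequently that the exponent of $p$ produced on the left equals the exponent of $p$ in $p^i \cdot p^{v_p(d)}/\gcd(p^i,p^{v_p(d)})^2$ on the right. Beyond this routine check the lemma is essentially a restatement of multiplicativity, so I do not expect any deeper obstacle; in particular the equality $v_q(N) = v_q(N/p^v)$ for primes $q \neq p$ is what makes the conditions on the two factors on the right reassemble into the single condition on the left.
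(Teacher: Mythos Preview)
Your proposal is correct and follows essentially the same approach as the paper: both arguments use $\gcd(t,p)=1$ to split $\gcd(tp^i,d)$ into its $p$-part and prime-to-$p$ part, then invoke multiplicativity of $\mu$ and separate the Euler product into the single prime $p$ versus the remaining primes. Your write-up is in fact a bit more explicit than the paper's (e.g.\ the identity $i+v_p(d)-2\min(i,v_p(d))=|i-v_p(d)|$ and the observation $v_q(N)=v_q(N/p^v)$ for $q\neq p$), but the underlying idea is identical.
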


\begin{proof} Since $t \mid N/p^v$ we have $\gcd(t,p)=1$. Using the multiplicative properties of the M\"obius function we obtain
{ \begin{align*}
& \mathcal{S}_{k,N,\epsilon,\psi}(t\cdot p^i,d)= \mu\left( \frac{t\cdot p^i \cdot d}{\gcd(t\cdot p^i,d)^2} \right) \prod_{\substack{p_2 \mid \gcd(t\cdot p^i,d),\\0<v_{p_2}(t\cdot p^i)=v_{p_2}(d)<v_{p_2}(N)}} \left( \frac{p_2^k +\epsilon(p_2) \overline{\psi}(p_2)}{p_2^k} \right)\\
&= \mu\left( \frac{ p^i \cdot p^{v_p(d)}}{\gcd( p^i,p^{v_p(d)})^2} \right)  \prod_{\substack{p_2 \mid \gcd( p^i,p^{v_p(d)}),\\0<v_{p_2}( p^i)=v_{p_2}(p^{v_p(d)})<v_{p_2}(N)}} \left( \frac{p_2^k +\epsilon(p_2) \overline{\psi}(p_2)}{p_2^k} \right)\\
& \qquad \times \mu\left( \frac{t\cdot d/p^{v_p(d)} }{\gcd(t ,d/p^{v_p(d)})^2} \right) \prod_{\substack{p_2 \mid \gcd(t ,d/p^{v_p(d)}),\\0<v_{p_2}(t )=v_{p_2}(d/p^{v_p(d)})<v_{p_2}(N/p^{v_p(d)})}} \left( \frac{p_2^k +\epsilon(p_2) \overline{\psi}(p_2)}{p_2^k} \right) \\
& =\mathcal{S}_{k,p^v,\epsilon,\psi}(p^i,p^{v_p(d)}) \mathcal{S}_{k,N/p^v,\epsilon,\psi}(t,d/p^{v_p(d)}).
\end{align*}}%

\end{proof}

\begin{lemma} \label{lem:3_2} Let $\gcd(t,p^i)=1$, then we have
\begin{align*}
\mathcal{R}_{k,\epsilon,\psi}(c,t \cdot p^i)& = \epsilon(-1) \mathcal{R}_{k,\epsilon,\psi}(p^{v_p(c)}, p^i) \mathcal{R}_{k,\epsilon,\psi}(c/p^{v_p(c)},t),\\
\mathcal{R}_{k,\epsilon,\psi}(t \cdot p^i,d) &=\epsilon(-1) \mathcal{R}_{k,\epsilon,\psi}( p^i, p^{v_p(d)}) \mathcal{R}_{k,\epsilon,\psi}(t, d/p^{v_p(d)}).
\end{align*}
\end{lemma}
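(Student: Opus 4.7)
The plan is to verify each identity by direct substitution into definition \eqref{RR}, exploiting the complete multiplicativity of the Dirichlet characters $\epsilon$ and $\overline{\psi}$ together with elementary properties of $\gcd$. Since the two identities are entirely symmetric in the roles of the two arguments of $\mathcal{R}_{k,\epsilon,\psi}$, it suffices to explain the first in detail and remark that the second is analogous.

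For the first identity, let $v = v_p(c)$ and write $c = c' p^v$ with $\gcd(c', p) = 1$. The hypothesis $\gcd(t, p^i) = 1$ forces $\gcd(t, p) = 1$, so together with $\gcd(c', p) = 1$ we obtain the clean factorization
\[
\gcd(c, t p^i) \;=\; \gcd(c', t) \cdot p^{\min(v, i)}.
\]
From this one reads off that
\[
\frac{-c}{\gcd(c, t p^i)} \;=\; \frac{-c'}{\gcd(c', t)} \cdot p^{v - \min(v, i)}, \qquad \frac{t p^i}{\gcd(c, t p^i)} \;=\; \frac{t}{\gcd(c', t)} \cdot p^{i - \min(v, i)},
\]
and that $(\gcd(c, t p^i)/(t p^i))^k$ factors as $(\gcd(c', t)/t)^k \cdot (p^{\min(v, i)}/p^i)^k$. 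Applying $\epsilon$ and $\overline{\psi}$ to these expressions and using complete multiplicativity, each of the three factors defining $\mathcal{R}_{k, \epsilon, \psi}(c, t p^i)$ splits cleanly into a $p$-part and a coprime-to-$p$ part, which I will recognize as the defining factors of $\mathcal{R}_{k, \epsilon, \psi}(p^v, p^i)$ and $\mathcal{R}_{k, \epsilon, \psi}(c', t)$ respectively.

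The one point requiring care is the sign bookkeeping, and this is where the extra $\epsilon(-1)$ on the right-hand side comes from. Each of $\mathcal{R}_{k, \epsilon, \psi}(p^v, p^i)$ and $\mathcal{R}_{k, \epsilon, \psi}(c', t)$ carries its own factor $\epsilon(-1)$ arising from the minus sign in the first argument, so their product contains $\epsilon(-1)^2 = 1$, while $\mathcal{R}_{k, \epsilon, \psi}(c, t p^i)$ contains only a single $\epsilon(-1)$. The prefactor $\epsilon(-1)$ in the statement compensates exactly for this discrepancy; this is the only genuine subtlety in the argument.

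The second identity is proved by the same procedure with the roles of the two arguments swapped: write $d = d' p^{v_p(d)}$ with $\gcd(d', p) = 1$, use $\gcd(t p^i, d) = \gcd(t, d') \cdot p^{\min(i, v_p(d))}$, and carry out the same factorization/multiplicativity argument. I expect no real obstacle beyond the bookkeeping described above; the proof is a short, direct verification.
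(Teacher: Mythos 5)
Your proposal is correct and follows essentially the same route as the paper's own proof: split $c$ and $t\cdot p^i$ into their $p$-parts and prime-to-$p$ parts, factor the $\gcd$ accordingly, and use complete multiplicativity of $\epsilon$ and $\overline{\psi}$, with the extra $\epsilon(-1)$ accounting for the fact that each factor on the right contributes its own $\epsilon(-1)$ while the left side contributes only one. No substantive difference.
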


\begin{proof}
By elementary manipulations we obtain
{ \begin{align*}
\mathcal{R}_{k,\epsilon,\psi}(c,t \cdot p^i)& = \epsilon \left( \frac{-c}{\gcd(c,t\cdot p^i)} \right) \overline{\psi} \left( \frac{t\cdot p^i}{\gcd(c,t\cdot p^i)} \right) \left(\frac{\gcd(c,t\cdot p^i)}{t\cdot p^i} \right)^k \\
& = \epsilon \left( \frac{-c/p^{v_c}}{\gcd(c/p^{v_c},t)} \right)  \overline{\psi} \left( \frac{t}{\gcd(c/p^{v_p(c)},t )} \right) \left(\frac{\gcd(c/p^{v_p(c)},t )}{t } \right)^k \\
& \quad \times \epsilon \left( \frac{p^{v_p(c)}}{\gcd(p^{v_p(c)}, p^i)} \right) \overline{\psi} \left( \frac{ p^i}{\gcd(p^{v_p(c)}, p^i)} \right) \left(\frac{\gcd(p^{v_c}, p^i)}{ p^i} \right)^k\\
&=  \epsilon(-1) \mathcal{R}_{k,\epsilon,\psi}(p^{v_p(c)}, p^i) \mathcal{R}_{k,\epsilon,\psi}(c/p^{v_p(c)},t).
\end{align*}}%
Proof of the second equation is similar.

\end{proof}

\begin{theorem} \label{th:3_1}  Let $c,d \mid N$, then
\begin{align*}
& \sum_{t \mid N} \mathcal{S}_{k,N,\epsilon,\psi}(c,t) \mathcal{R}_{k,\epsilon,\psi}(c,t) \mathcal{R}_{k,\epsilon,\psi}(t,d) =\begin{cases}  0 & \mbox{ if $c \neq d$,}\\
 \ds \prod_{p \mid N} \frac{p^k - \epsilon(p) \overline{\psi}(p)}{p^k} & \mbox{ if $c = d$.} \end{cases}
\end{align*}
\end{theorem}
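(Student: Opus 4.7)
The plan is to reduce the identity to a prime-power analogue via the multiplicativity supplied by Lemmas \ref{lem:3_1} and \ref{lem:3_2}. Iterating Lemma \ref{lem:3_1} once per prime $p \mid N$ yields
\[
\mathcal{S}_{k,N,\epsilon,\psi}(c,t)=\prod_{p\mid N}\mathcal{S}_{k,p^{v_p(N)},\epsilon,\psi}(p^{v_p(c)},p^{v_p(t)}),
\]
where primes $p\nmid ct$ contribute the trivial factor $\mathcal{S}_{k,p^{v_p(N)},\epsilon,\psi}(1,1)=1$. Similarly, iterating Lemma \ref{lem:3_2} expresses each of $\mathcal{R}_{k,\epsilon,\psi}(c,t)$ and $\mathcal{R}_{k,\epsilon,\psi}(t,d)$ as $\epsilon(-1)^{1-r}$ times the analogous product of prime-power $\mathcal{R}$'s over $p\mid N$, where $r=|\{p:p\mid N\}|$. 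Multiplying the two factorizations, the sign prefactors combine to $\epsilon(-1)^{2(1-r)}=1$, giving
\[
\mathcal{R}_{k,\epsilon,\psi}(c,t)\,\mathcal{R}_{k,\epsilon,\psi}(t,d)=\prod_{p\mid N}\mathcal{R}_{k,\epsilon,\psi}(p^{v_p(c)},p^{v_p(t)})\,\mathcal{R}_{k,\epsilon,\psi}(p^{v_p(t)},p^{v_p(d)}).
\]
Since the map $t\mapsto(v_p(t))_{p\mid N}$ is a bijection between divisors of $N$ and tuples with $0\le v_p(t)\le v_p(N)$, the sum on the left of the theorem factors as $\prod_{p\mid N}\Sigma_p(a,b)$, where
\[
\Sigma_p(a,b):=\sum_{j=0}^{v}\mathcal{S}_{k,p^{v},\epsilon,\psi}(p^{a},p^{j})\,\mathcal{R}_{k,\epsilon,\psi}(p^{a},p^{j})\,\mathcal{R}_{k,\epsilon,\psi}(p^{j},p^{b}),
\]
with $v=v_p(N)$, $a=v_p(c)$, $b=v_p(d)$.

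It now suffices to prove the prime-power identity
\[
\Sigma_p(a,b)=\delta_{a,b}\cdot\frac{p^k-\epsilon(p)\overline{\psi}(p)}{p^k}.
\]
Indeed, if $c=d$ the product over $p\mid N$ produces the stated right-hand side, while if $c\ne d$ some prime $p\mid N$ has $v_p(c)\ne v_p(d)$ and the corresponding factor vanishes.

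To establish the prime-power identity I would unpack the definitions. From \eqref{SS}, $\mathcal{S}_{k,p^v,\epsilon,\psi}(p^a,p^j)=0$ unless $|a-j|\le 1$, so $\Sigma_p(a,b)$ has at most three nonzero terms, indexed by $j\in\{a-1,a,a+1\}\cap[0,v]$; the off-diagonal $\mathcal{S}$-values are $-1$, while the diagonal value is $1+\epsilon(p)\overline{\psi}(p)/p^k$ when $0<a<v$ and drops to $1$ at the boundaries $a\in\{0,v\}$. Writing $X=\epsilon(p)$, $Y=\overline{\psi}(p)p^{-k}$, $\alpha=XY=\epsilon(p)\overline{\psi}(p)/p^k$, the formula \eqref{RR} simplifies to
\[
\mathcal{R}_{k,\epsilon,\psi}(p^a,p^j)=\epsilon(-1)\,X^{\max(a-j,0)}\,Y^{\max(j-a,0)}.
\]
The two factors of $\epsilon(-1)$ in each summand cancel, so $\Sigma_p(a,b)$ reduces to a short polynomial expression in $X,Y$ with the single relation $XY=\alpha$. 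A direct check in four cases ($a=b$ interior, $a=b=0$, $a=b=v$, and $a\ne b$) shows the three-term sum equals $-\alpha+(1+\alpha)-\alpha=1-\alpha$ when $a=b$ (with the obvious adjustment at the edges) and telescopes to $0$ when $a\ne b$.

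The main obstacle is bookkeeping: carefully tracking the edge cases $a\in\{0,v\}$, where one of the three terms of $\Sigma_p$ drops out while the diagonal value of $\mathcal{S}$ simultaneously shifts from $1+\alpha$ to $1$, so that the net value remains $1-\alpha$. Once the computation is framed as a polynomial identity in $X,Y$ modulo $XY=\alpha$, the degenerate prime cases $\epsilon(p)=0$ (when $p\mid L$) and $\overline{\psi}(p)=0$ (when $p\mid M$) follow automatically by specialization.
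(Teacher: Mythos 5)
Your proposal is correct and follows essentially the same route as the paper: factor the sum over $t \mid N$ into local sums over prime powers via Lemmas \ref{lem:3_1} and \ref{lem:3_2} (the $\epsilon(-1)$ prefactors cancelling in pairs), then verify the three-term local identity using the explicit prime-power values of $\mathcal{S}$ and $\mathcal{R}$, with attention to the boundary exponents $0$ and $v_p(N)$. The paper organizes the local verification into seven cases where you use four, but the computation is the same.
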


\begin{proof} Let $p \mid N$ be prime and $v_p(N)=v$. Then we use Lemmas \ref{lem:3_1} and \ref{lem:3_2} to obtain 
{ \begin{align}
& \sum_{t \mid N} \mathcal{S}_{k,N,\epsilon,\psi}(c,t) \mathcal{R}_{k,\epsilon,\psi}(c,t) \mathcal{R}_{k,\epsilon,\psi}(t,d)\nonumber \\
& \qquad = \sum_{0 \leq i \leq v}\sum_{t \mid N/p^v} \mathcal{S}_{k,N,\epsilon,\psi}(c,t \cdot p^i) \mathcal{R}_{k,\epsilon,\psi}(c,t \cdot p^i) \mathcal{R}_{k,\epsilon,\psi}(t \cdot p^i,d)\nonumber \\
&\qquad = \sum_{0 \leq i \leq v}\sum_{t \mid N/p^v} \mathcal{S}_{k,p^v,\epsilon,\psi}(p^{v_p(c)},p^i) \mathcal{S}_{k,N/p^v,\epsilon,\psi}(c/p^{v_p(c)}, t)  \nonumber \\
& \qquad \qquad \qquad \times \epsilon(-1) \mathcal{R}_{k,\epsilon,\psi}(p^{v_p(c)}, p^i) \mathcal{R}_{k,\epsilon,\psi}(c/p^{v_p(c)},t)\nonumber \\
& \qquad \qquad \qquad \times \epsilon(-1) \mathcal{R}_{k,\epsilon,\psi}( p^i, p^{v_p(d)}) \mathcal{R}_{k,\epsilon,\psi}(t, d/p^{v_p(d)})\nonumber \\
& \qquad = \sum_{0 \leq i \leq v}  \mathcal{S}_{k,p^v,\epsilon,\psi}(p^{v_p(c)},p^i) \mathcal{R}_{k,\epsilon,\psi}(p^{v_p(c)}, p^i) \mathcal{R}_{k,\epsilon,\psi}( p^i, p^{v_p(d)}) \nonumber \\
&\qquad \qquad  \times  \sum_{t \mid N/p^v}  \mathcal{S}_{k,N/p^v,\epsilon,\psi}(c/p^{v_p(c)},t)  \mathcal{R}_{k,\epsilon,\psi}(c/p^{v_p(c)},t)  \mathcal{R}_{k,\epsilon,\psi}(t, c/p^{v_p(d)}).\nonumber 
\end{align}}%
Using this recursively we obtain
{ \begin{align}
& \sum_{t \mid N} \mathcal{S}_{k,N,\epsilon,\psi}(c,t) \mathcal{R}_{k,\epsilon,\psi}(c,t) \mathcal{R}_{k,\epsilon,\psi}(t,d)\nonumber \\
& \qquad = \prod_{p \mid N}\sum_{0 \leq i \leq v}  \mathcal{S}_{k,p^v,\epsilon,\psi}(p^{v_p(c)},p^i) \mathcal{R}_{k,\epsilon,\psi}(p^{v_p(c)}, p^i) \mathcal{R}_{k,\epsilon,\psi}( p^i, p^{v_p(d)}). \label{7_1}
\end{align}}%

Now we prove for all $p \mid N$ we have
\begin{align}
& \sum_{0 \leq i \leq v}  \mathcal{S}_{k,p^v,\epsilon,\psi}(p^{v_p(c)},p^i) \mathcal{R}_{k,\epsilon,\psi}(p^{v_p(c)}, p^i) \mathcal{R}_{k,\epsilon,\psi}( p^i, p^{v_p(d)}) \nonumber \\
& \qquad =\begin{cases}
 0 & \mbox{ if $v_p(c) \neq v_p(d)$,} \\
\ds  \frac{p^k - \epsilon(p) \overline{\psi}(p)}{p^k} & \mbox{ if $v_p(c) = v_p(d)$.}
 \end{cases} \label{7_2}
\end{align}

We first note that
\bal
\mathcal{S}_{k,p^v,\epsilon,\psi}(p^{v_p(c)},p^i) = \begin{cases}
0 & \mbox{ if $\vert v_p(c) - i \vert >1$,} \\
\ds \frac{p^k + \epsilon(p) \overline{\psi}(p)}{p^k} & \mbox{ if $i = v_p(c)$ and $v> v_p(c) > 0$,}\\
1 & \mbox{ if $i = v_p(c)$ and $v_p(c) = v$,}\\
1 & \mbox{ if $i = v_p(c)$ and $v_p(c) = 0$,}\\
- 1 & \mbox{ if $i = v_p(c) - 1 $ and $v_p(c) > 0$,} \\
- 1 & \mbox{ if $i = v_p(c) + 1 $ and $v_p(c) < v$,}
\end{cases} \label{eqfix_1}
\nal
and
\bal
& \mathcal{R}_{k,\epsilon,\psi}(p^i, p^j) = \begin{cases}
\epsilon(-1) & \mbox{ if $i = j$,}\\
\ds \epsilon(-1) \overline{\psi}(p^{j-i})  \left(\frac{1}{p^{j-i}} \right)^k & \mbox{ if $i < j $,} \\
\epsilon(-p^{i-j}) & \mbox{ if $i > j$.}
\end{cases} \label{eqfix_2}
\nal
The cases
\begin{itemize}
\item[](Case 1) $0 < v_p(c) < v_p(d) \leq v$,
\item[](Case 2) $0 = v_p(c) < v_p(d) \leq v$,
\item[](Case 3) $v > v_p(c) > v_p(d) \geq 0$,
\item[](Case 4) $v = v_p(c) > v_p(d) \geq 0$,
\item[](Case 5) $0 < v_p(c) = v_p(d) < v$,
\item[](Case 6) $0 = v_p(c) = v_p(d)$,
\item[](Case 7) $v= v_p(c) = v_p(d)$,
\end{itemize}
needs to be handled separately, which is done below.

{\bf Case 1:} Let $0 < v_p(c) < v_p(d) \leq v$, then by employing \eqref{eqfix_1} for all $i$ such that $\vert v_p(c) - i \vert >1$ we have
\bals
\mathcal{S}_{k,p^v,\epsilon,\psi}(p^{v_p(c)},p^i) \mathcal{R}_{k,\epsilon,\psi}(p^{v_p(c)}, p^i) \mathcal{R}_{k,\epsilon,\psi}( p^i, p^{v_p(d)}) = 0.
\nals
Therefore we have
\bals
& \sum_{0 \leq i \leq v}  \mathcal{S}_{k,p^v,\epsilon,\psi}(p^{v_p(c)},p^i) \mathcal{R}_{k,\epsilon,\psi}(p^{v_p(c)}, p^i) \mathcal{R}_{k,\epsilon,\psi}( p^i, p^{v_p(d)})\\
&= \mathcal{S}_{k,p^v,\epsilon,\psi}(p^{v_p(c)},p^{v_p(c)-1}) \mathcal{R}_{k,\epsilon,\psi}(p^{v_p(c)}, p^{v_p(c)-1}) \mathcal{R}_{k,\epsilon,\psi}( p^{v_p(c)-1}, p^{v_p(d)})\\
&  \quad + \mathcal{S}_{k,p^v,\epsilon,\psi}(p^{v_p(c)},p^{v_p(c)}) \mathcal{R}_{k,\epsilon,\psi}(p^{v_p(c)}, p^{v_p(c)}) \mathcal{R}_{k,\epsilon,\psi}( p^{v_p(c)}, p^{v_p(d)})\\
& \quad + \mathcal{S}_{k,p^v,\epsilon,\psi}(p^{v_p(c)},p^{v_p(c)+1}) \mathcal{R}_{k,\epsilon,\psi}(p^{v_p(c)}, p^{v_p(c)+1}) \mathcal{R}_{k,\epsilon,\psi}( p^{v_p(c)+1}, p^{v_p(d)}),
\nals
which, by \eqref{eqfix_1} and \eqref{eqfix_2}, equals to
\bals
&= -1 \cdot \epsilon(-p) \cdot \epsilon(-1)\overline{\psi}( p^{v_p(d)-v_p(c)+1}) \left( \frac{1}{p^{v_p(d)-v_p(c)+1}} \right)^k\\
&  \quad +\frac{p^k + \epsilon(p) \overline{\psi}(p)}{p^k} \cdot \epsilon(-1) \cdot \epsilon(-1) \overline{\psi}(p^{v_p(d)-v_p(c)})\left( \frac{1}{p^{v_p(d)-v_p(c)}} \right)^k\\
& \quad + (-1) \cdot \epsilon(-1) \overline{\psi}(p) \left(\frac{1}{p} \right)^k  \cdot \epsilon(-1) \overline{\psi}(p^{v_p(d)-v_p(c)-1})  \left(\frac{1}{p^{v_p(d)-v_p(c)-1}} \right)^k.
\nals
By using multiplicative properties of Dirichlet characters we conclude that this expression is equal to $0$.

{\bf Case 2:} Let $0 = v_p(c) < v_p(d) \leq v$, then by employing \eqref{eqfix_1} and \eqref{eqfix_2} we have
\bals
& \sum_{0 \leq i \leq v}  \mathcal{S}_{k,p^v,\epsilon,\psi}(p^{v_p(c)},p^i) \mathcal{R}_{k,\epsilon,\psi}(p^{v_p(c)}, p^i) \mathcal{R}_{k,\epsilon,\psi}( p^i, p^{v_p(d)})\\
&= \mathcal{S}_{k,p^v,\epsilon,\psi}(1,1) \mathcal{R}_{k,\epsilon,\psi}(1, 1) \mathcal{R}_{k,\epsilon,\psi}( 1, p^{v_p(d)})\\
& \quad + \mathcal{S}_{k,p^v,\epsilon,\psi}(1,p) \mathcal{R}_{k,\epsilon,\psi}(1, p) \mathcal{R}_{k,\epsilon,\psi}( p, p^{v_p(d)})\\
&= \overline{\psi}( p^{v_p(d)}) \left(\frac{1}{p^{v_p(d)}} \right)^k - \overline{\psi}(p^{v_p(d)})  \left(\frac{1}{p^{v_p(d)}} \right)^k,
\nals
which equals to $0$.

{\bf Case 3:} Let $v > v_p(c) > v_p(d) \geq 0$, then by employing \eqref{eqfix_1} and \eqref{eqfix_2} we have
\bals
& \sum_{0 \leq i \leq v}  \mathcal{S}_{k,p^v,\epsilon,\psi}(p^{v_p(c)},p^i) \mathcal{R}_{k,\epsilon,\psi}(p^{v_p(c)}, p^i) \mathcal{R}_{k,\epsilon,\psi}( p^i, p^{v_p(d)})\\
&= \mathcal{S}_{k,p^v,\epsilon,\psi}(p^{v_p(c)},p^{v_p(c)-1}) \mathcal{R}_{k,\epsilon,\psi}(p^{v_p(c)}, p^{v_p(c)-1}) \mathcal{R}_{k,\epsilon,\psi}( p^{v_p(c)-1}, p^{v_p(d)})\\
& \quad + \mathcal{S}_{k,p^v,\epsilon,\psi}(p^{v_p(c)},p^{v_p(c)}) \mathcal{R}_{k,\epsilon,\psi}(p^{v_p(c)}, p^{v_p(c)}) \mathcal{R}_{k,\epsilon,\psi}( p^{v_p(c)}, p^{v_p(d)})\\
& \quad + \mathcal{S}_{k,p^v,\epsilon,\psi}(p^{v_p(c)},p^{v_p(c)+1}) \mathcal{R}_{k,\epsilon,\psi}(p^{v_p(c)}, p^{v_p(c)+1}) \mathcal{R}_{k,\epsilon,\psi}( p^{v_p(c)+1}, p^{v_p(d)})\\
& = - \epsilon(-p) \epsilon( - p^{v_p(c)-1- v_p(d)}) + \frac{p^k + \epsilon(p)\overline{\psi}(p) }{p^k} \cdot \epsilon(-1) \cdot  \epsilon( - p^{v_p(c)- v_p(d)})\\
&  \quad - \epsilon(-1) \overline{\psi}(p) \frac{1}{p^k} \cdot  \epsilon(-p^{v_p(c)+1-v_p(d)}).
\nals
By using multiplicative properties of Dirichlet characters we conclude that this expression is equal to $0$.

{\bf Case 4:} Let $v = v_p(c) > v_p(d) \geq 0$, then by employing \eqref{eqfix_1} and \eqref{eqfix_2} we have
\bals
& \sum_{0 \leq i \leq v}  \mathcal{S}_{k,p^v,\epsilon,\psi}(p^{v_p(c)},p^i) \mathcal{R}_{k,\epsilon,\psi}(p^{v_p(c)}, p^i) \mathcal{R}_{k,\epsilon,\psi}( p^i, p^{v_p(d)})\\
& = \mathcal{S}_{k,p^v,\epsilon,\psi}(p^{v_p(c)},p^{v_p(c)-1}) \mathcal{R}_{k,\epsilon,\psi}(p^{v_p(c)}, p^{v_p(c)-1}) \mathcal{R}_{k,\epsilon,\psi}( p^{v_p(c)-1}, p^{v_p(d)})\\
&  \quad + \mathcal{S}_{k,p^v,\epsilon,\psi}(p^{v_p(c)},p^{v_p(c)}) \mathcal{R}_{k,\epsilon,\psi}(p^{v_p(c)}, p^{v_p(c)}) \mathcal{R}_{k,\epsilon,\psi}( p^{v_p(c)}, p^{v_p(d)})\\
& = - \epsilon(p^{v_p(c)-v_p(d)} )+ \epsilon(p^{v_p(c)-v_p(d)})\\
& = 0.
\nals

{\bf Case 5:} Let $0 < v_p(c) = v_p(d) < v$, then by employing \eqref{eqfix_1}, \eqref{eqfix_2} and multiplicative properties of Dirichlet characters we have
\bals
& \sum_{0 \leq i \leq v}  \mathcal{S}_{k,p^v,\epsilon,\psi}(p^{v_p(c)},p^i) \mathcal{R}_{k,\epsilon,\psi}(p^{v_p(c)}, p^i) \mathcal{R}_{k,\epsilon,\psi}( p^i, p^{v_p(d)})\\
&= \mathcal{S}_{k,p^v,\epsilon,\psi}(p^{v_p(c)},p^{v_p(c)-1}) \mathcal{R}_{k,\epsilon,\psi}(p^{v_p(c)}, p^{v_p(c)-1}) \mathcal{R}_{k,\epsilon,\psi}( p^{v_p(c)-1}, p^{v_p(c)})\\
&  \quad + \mathcal{S}_{k,p^v,\epsilon,\psi}(p^{v_p(c)},p^{v_p(c)}) \mathcal{R}_{k,\epsilon,\psi}(p^{v_p(c)}, p^{v_p(c)}) \mathcal{R}_{k,\epsilon,\psi}( p^{v_p(c)}, p^{v_p(c)})\\
& \quad + \mathcal{S}_{k,p^v,\epsilon,\psi}(p^{v_p(c)},p^{v_p(c)+1}) \mathcal{R}_{k,\epsilon,\psi}(p^{v_p(c)}, p^{v_p(c)+1}) \mathcal{R}_{k,\epsilon,\psi}( p^{v_p(c)+1}, p^{v_p(c)})\\
&= - \epsilon(-p) \cdot \epsilon(-1) \overline{\psi}(p) \frac{1}{p^k}+  \frac{p^k + \epsilon(p) \overline{\psi}(p)}{p^k} \cdot \epsilon(-1)  \cdot \epsilon(-1) - \epsilon(-1) \overline{\psi}(p) \frac{1}{p^k} \cdot \epsilon(-p)\\
&= \frac{p^k - \epsilon(p) \overline{\psi}(p)}{p^k}.
\nals

{\bf Case 6:} Let $0=v_p(c) = v_p(d) $, then by employing \eqref{eqfix_1}, \eqref{eqfix_2} and multiplicative properties of Dirichlet characters we have
\bals
& \sum_{0 \leq i \leq v}  \mathcal{S}_{k,p^v,\epsilon,\psi}(p^{v_p(c)},p^i) \mathcal{R}_{k,\epsilon,\psi}(p^{v_p(c)}, p^i) \mathcal{R}_{k,\epsilon,\psi}( p^i, p^{v_p(d)})\\
&=  \mathcal{S}_{k,p^v,\epsilon,\psi}(1,1) \mathcal{R}_{k,\epsilon,\psi}(1,1) \mathcal{R}_{k,\epsilon,\psi}( 1,1 )+ \mathcal{S}_{k,p^v,\epsilon,\psi}(1,p ) \mathcal{R}_{k,\epsilon,\psi}(1, p ) \mathcal{R}_{k,\epsilon,\psi}( p, 1)\\
& =  1 \cdot \epsilon(-1) \cdot \epsilon(-1) - \epsilon(-1) \overline{\psi}(p) \frac{1}{p^k} \cdot \epsilon(-p)\\
& = \frac{p^k - \epsilon(p) \overline{\psi}(p)}{p^k}.
\nals

{\bf Case 7:} Let $v=v_p(c) = v_p(d)$, then by employing \eqref{eqfix_1}, \eqref{eqfix_2} and multiplicative properties of Dirichlet characters we have
\bals
& \sum_{0 \leq i \leq v}  \mathcal{S}_{k,p^v,\epsilon,\psi}(p^{v_p(c)},p^i) \mathcal{R}_{k,\epsilon,\psi}(p^{v_p(c)}, p^i) \mathcal{R}_{k,\epsilon,\psi}( p^i, p^{v_p(d)})\\
&= \mathcal{S}_{k,p^v,\epsilon,\psi}(p^{v},p^{v-1}) \mathcal{R}_{k,\epsilon,\psi}(p^{v}, p^{v-1}) \mathcal{R}_{k,\epsilon,\psi}( p^{v-1}, p^{v})\\
&  \quad + \mathcal{S}_{k,p^v,\epsilon,\psi}(p^{v},p^{v}) \mathcal{R}_{k,\epsilon,\psi}(p^{v}, p^{v}) \mathcal{R}_{k,\epsilon,\psi}( p^{v}, p^{v})\\
&= -1 \cdot  \epsilon(-p) \cdot \epsilon(-1) \overline{\psi}(p) \frac{1}{p^k} + 1 \cdot \epsilon(-1)  \cdot \epsilon(-1) \\
& = \frac{p^k - \epsilon(p) \overline{\psi}(p)}{p^k}.
\nals

Finally, if $c \neq d$, then there exists a prime $p \mid N$ such that $v_p(c) \neq v_p(d)$. Hence by \eqref{7_2} the product in \eqref{7_1} is $0$. If $c=d$ then for all prime divisors $p$ of $N$ we have $v_p(c) = v_p(d)$. Therefore by \eqref{7_1} and \eqref{7_2} we have the desired result.


\end{proof}

\section{Constant terms of expansions of Eisenstein series at the cusps} \label{sec_ft}
Recall that $E_{k}(\epsilon,\psi;dz)$ is defined by \eqref{eis} and we have
\begin{align*}
& {E_{k}(\epsilon,\psi;dz )}\in E_{k}(\Gamma_0(N),\chi) \mbox{ when $(k,\epsilon,\psi)\neq (2,\chi_1,\chi_1)$},
\end{align*}
and
\begin{align*}
& L_d(z) :=E_{2}(\chi_1,\chi_1;z)-dE_{2}(\chi_1,\chi_1;dz) \in E_{2}(\Gamma_0(N),\chi_1).
\end{align*}
The constant terms of Eisenstein series in the expansion at the cusp $a/c$ with $\gcd(a,c)=1$ are given by
\begin{align}
& [0]_{a/c} E_{k}(\epsilon,\psi;dz ) = \ds \overline{\psi}(a)\mathcal{R}_{k,\epsilon,\psi} (c,Md)  \mbox{ when $(k,\epsilon,\psi)\neq (2,\chi_1,\chi_1)$ and} \label{7_6} \\
& [0]_{a/c} L_d(z)= \mathcal{R}_{2,\chi_1,\chi_1}(c,1)-d\mathcal{R}_{2,\chi_1,\chi_1}(c,d), \label{7_4}
\end{align}
where $\mathcal{R}_{k,\epsilon,\psi} (c,t) $ is defined by \eqref{RR}. For \eqref{7_6} see \cite[(6.2)]{sqfreepaper}, \cite[Proposition 8.5.6 and Ex. 8.7 (i) on pg. 308]{cohenbook}. The formula \eqref{7_4} is proved later in this section.  

The structure of the terms $[0]_{a/c}E_k(\epsilon,\psi;dz)$ is complicated and difficult to work with. We observe that taking the average $[0]_{c,{\psi}} E_k( \epsilon,\psi;dz)$ gives constant terms a very nice structure which is easier to work with, see \eqref{fix2_1}. Throughout the section we assume $k,N \in \nn$, $\epsilon$ and $\psi$ are primitive Dirichlet characters with conductors $L$ and $M$, respectively, such that $LM \mid N$ and $(k,\epsilon,\psi) \neq (2,\chi_1,\chi_1)$.
\begin{lemma} \label{lem:4_1} Let $c \mid N$ and $LMd \mid N$. If $M \nmid c$, or $M \mid c$ and $L \nmid N/c$, then
\begin{align*}
[0]_{a/c} E_k(\epsilon,\psi;dz) =0.
\end{align*}
\end{lemma}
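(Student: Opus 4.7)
The plan is to apply the explicit formula for the constant term at the cusp $a/c$ given in \eqref{7_6}, namely $[0]_{a/c} E_k(\epsilon,\psi;dz) = \overline{\psi}(a) \mathcal{R}_{k,\epsilon,\psi}(c,Md)$, and show that $\mathcal{R}_{k,\epsilon,\psi}(c,Md)=0$ in each of the two cases by exhibiting a prime at which one of the Dirichlet character factors vanishes. Writing $g=\gcd(c,Md)$ and unpacking \eqref{RR}, we have
$$\mathcal{R}_{k,\epsilon,\psi}(c,Md) = \epsilon\!\left(\tfrac{-c}{g}\right)\overline{\psi}\!\left(\tfrac{Md}{g}\right)\left(\tfrac{g}{Md}\right)^{k},$$
and since $\epsilon$ is primitive modulo $L$ and $\psi$ is primitive modulo $M$, $\epsilon(n)=0$ whenever $\gcd(n,L)>1$, and $\psi(n)=0$ whenever $\gcd(n,M)>1$. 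The entire proof reduces to a prime-by-prime valuation argument.

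For the first case, suppose $M\nmid c$. Pick a prime $p\mid M$ with $v_p(c)<v_p(M)$. Then
$$v_p(g)=\min\bigl(v_p(c),\,v_p(M)+v_p(d)\bigr)=v_p(c)<v_p(M)\leq v_p(Md),$$
so $p\mid Md/g$. Since also $p\mid M$, the factor $\overline{\psi}(Md/g)$ vanishes and hence $\mathcal{R}_{k,\epsilon,\psi}(c,Md)=0$.

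For the second case, suppose $M\mid c$ but $L\nmid N/c$. Pick a prime $p\mid L$ with $v_p(L)>v_p(N/c)=v_p(N)-v_p(c)$. Combining this with the hypothesis $LMd\mid N$, which gives $v_p(L)+v_p(M)+v_p(d)\leq v_p(N)$, one obtains
$$v_p(Md)=v_p(M)+v_p(d)\leq v_p(N)-v_p(L)<v_p(c).$$
Therefore $v_p(g)=v_p(Md)<v_p(c)$, so $p\mid c/g$; since $p\mid L$ as well, $\epsilon(-c/g)=0$, and again $\mathcal{R}_{k,\epsilon,\psi}(c,Md)=0$.

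There is no real obstacle here; the only thing to be careful about is invoking $LMd\mid N$ at the correct moment in the second case to translate the global divisibility condition $L\nmid N/c$ into the local inequality $v_p(Md)<v_p(c)$ that forces the prime $p$ into $c/g$. Once that translation is done, primitivity of $\epsilon$ and $\psi$ does all the remaining work.
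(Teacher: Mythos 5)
Your proof is correct and takes essentially the same route as the paper's: both apply the constant-term formula \eqref{7_6} and exhibit a prime at which $\overline{\psi}\bigl(Md/\gcd(c,Md)\bigr)$ vanishes when $M\nmid c$, respectively $\epsilon\bigl(-c/\gcd(c,Md)\bigr)$ vanishes when $L\nmid N/c$. The only cosmetic difference is that the paper phrases the second case via $c_1=c/M$ and $d\mid N/LM$, whereas you run the same valuation argument directly on $c$ and $Md$.
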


\begin{proof}
First we let $M \nmid c$. Then $ M \nmid \gcd(Md,c) $. Thus $\ds \gcd \left( \frac{Md}{\gcd(Md,c)},M \right) \mid M$, which implies $\ds  \overline{\psi} \left( \frac{Md}{\gcd(Md,c)} \right)=0$ since the conductor of $\psi$ is $M$. Therefore the result follows from \eqref{7_6}.\\

Second, we let $M \mid c$, $c_1=M/c$, thus $c_1 \mid N/M$. Assume $L \nmid N/c$, thus $c_1 \nmid N/LM$. Since $\frac{(N/M)}{L} \in \zz$, $\frac{(N/M)}{c_1} \in \zz$ and $\frac{(N/M)/c_1}{L} \not\in \zz$, we have $\gcd(c_1,L) \neq 1$. Additionally, there exists a prime $p$ dividing $c_1$ such that 
\begin{align}
& v_p(c_1) > v_p(N) - v_p(M)-v_p(L). \label{5_1}
\end{align}
Since $c_1 \mid N/M$, for all $p \mid c_1$ we have
\begin{align}
& v_p(c_1) \leq v_p(N) - v_p(M). \label{JJ5_1}
\end{align}
By \eqref{5_1} and \eqref{JJ5_1} we have
\begin{align*}
& v_p(N) - v_p(M)> v_p(N) - v_p(M)-v_p(L). 
\end{align*}
Therefore 
\begin{align}
& v_p(L)>  0.\label{5_2}
\end{align}
Since $d \mid N/LM$ we have
\begin{align}
v_p(N)-v_p(M)-v_p(L) \geq v_p(d) \geq 0. \label{5_3}
\end{align}
Inequalities \eqref{5_1} and \eqref{5_3} together implies $v_p(c_1) > v_p(d)$. Therefore by employing \eqref{5_2} we have
\begin{align*}
p \mid \gcd\left( \frac{p^{v_p(c_1)}}{\gcd(p^{v_p(c_1)},p^{v_p(d)})},p^{v_p(L)} \right).
\end{align*}
That is,
\begin{align*}
p \mid \gcd\left( \frac{c_1}{\gcd(c_1,d)},L \right).
\end{align*}
This implies $\ds  \epsilon \left( \frac{c}{\gcd(Md,c)} \right)=0$ since conductor of $\epsilon$ is $L$. Therefore the result follows from \eqref{7_6}.

\end{proof}

\begin{lemma} \label{lem:4_2} Let $c \in \nn$, and let $\psi_1,\psi_2$ be two primitive Dirichlet characters with conductors $M_1$ and $M_2$, respectively. Let both $M_1$ and $M_2$ divide $c$. Then we have
\bals
\sum_{\substack{a =1, \\ \gcd(a,c)=1}}^c \overline{\psi_1} (a) {\psi_2}(a) = \begin{cases}
0 & \mbox{if $\psi_1 \neq \psi_2$,}\\
\ds  \phi(c) & \mbox{if $\psi_1 = \psi_2$.}
\end{cases}
\nals
\end{lemma}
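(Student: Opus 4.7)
The plan is to recognize this as a direct consequence of the standard orthogonality of Dirichlet characters applied to the product character $\chi := \overline{\psi_1}\psi_2$. Since both $M_1$ and $M_2$ divide $c$, each of $\psi_1,\psi_2$ (and hence their product) may be viewed as a Dirichlet character modulo $c$, so the sum is exactly
\[
\sum_{a \in (\zz/c\zz)^*} \chi(a),
\]
and the well-known orthogonality relation says this equals $\phi(c)$ if $\chi$ is the principal character modulo $c$ and equals $0$ otherwise. So the whole lemma reduces to verifying that $\chi = \overline{\psi_1}\psi_2$ is principal modulo $c$ if and only if $\psi_1 = \psi_2$ as primitive characters.

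For the ``if'' direction, when $\psi_1=\psi_2$ we have $M_1=M_2$ and $|\psi_1(a)|^2=1$ for every $a$ coprime to $M_1$; since $M_1 \mid c$ the condition $\gcd(a,c)=1$ implies $\gcd(a,M_1)=1$, giving $\chi(a)=1$ identically on $(\zz/c\zz)^*$, so the sum is $\phi(c)$. For the ``only if'' direction, I would argue by contrapositive: assume $\psi_1\neq \psi_2$ and show $\chi$ is nontrivial modulo $c$. Let $\omega$ denote the primitive character inducing $\overline{\psi_1}\psi_2$; its conductor $\mathcal{M}_\omega$ divides $\lcm(M_1,M_2)\mid c$. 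If $\chi$ were principal modulo $c$, then $\omega$ would be trivial on all residues coprime to $c$, and because its conductor divides $c$ this forces $\omega$ to be the trivial character. But then $\psi_2(a)=\psi_1(a)$ for every $a$ coprime to $M_1M_2$, and by the uniqueness of the primitive character inducing a given character this forces $M_1=M_2$ and $\psi_1=\psi_2$, contradicting our assumption.

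I do not anticipate any real obstacle; the only delicate point is the final step of the contrapositive, where one must be careful to distinguish ``principal modulo $c$'' from ``trivial as a primitive character'' and invoke the standard fact that a primitive character is uniquely determined by the character it induces at any multiple of its conductor. Once this is in place, applying orthogonality gives both cases in a single line.
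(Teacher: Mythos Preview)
Your proposal is correct and is genuinely different from the paper's argument.

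The paper removes the coprimality condition by M\"obius inversion, writing
\[
\sum_{\substack{a=1\\ \gcd(a,c)=1}}^{c}\overline{\psi_1}(a)\psi_2(a)
=\sum_{\substack{s\mid c\\ \gcd(A,s)=1}}\mu(s)\,\overline{\psi_1}(s)\psi_2(s)\sum_{t=1}^{c/s}\overline{\psi_1}(t)\psi_2(t),
\]
with $A=\lcm(M_1,M_2)$, and then evaluates the inner (complete) sum over a full period separately in the two cases; the case $\psi_1=\psi_2$ requires a further explicit computation with Euler products to recover $\phi(c)$. You instead regard the whole expression as the sum of a single Dirichlet character over $(\zz/c\zz)^*$ and invoke orthogonality in one stroke. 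Both routes hinge on the same underlying fact---that $\overline{\psi_1}\psi_2$, viewed modulo $c$, is principal if and only if $\psi_1=\psi_2$---but the paper leaves this implicit when it asserts the vanishing of the inner sum, whereas you spell it out via uniqueness of the inducing primitive character. Your approach is shorter and more conceptual; the paper's is more self-contained and avoids citing orthogonality as a black box.
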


\begin{proof}
The modulus of the character $\overline{\psi_1} {\psi_2}$ is $\lcm(M_1,M_2)$, see \cite[pg. 80]{miyake}. We let $A=\lcm(M_1,M_2)$, then $A \mid c$. We have
\bals
\sum_{\substack{a =1, \\ \gcd(a,c)=1}}^c \overline{\psi_1} (a) {\psi_2}(a) & = \sum_{a=1}^c \overline{\psi_1} (a) {\psi_2}(a) \sum_{s \mid \gcd(a,c)} \mu(s)\\
& = \sum_{s \mid c} \mu(s) \sum_{\substack{a =1, \\ s \mid a}}^c \overline{\psi_1} (a) {\psi_2}(a)\\
& = \sum_{s \mid c} \mu(s) \overline{\psi_1} (s) {\psi_2}(s) \sum_{\substack{t=1}}^{c/s} \overline{\psi_1} (t) {\psi_2}(t).
\nals
We have $\overline{\psi_1} (s) {\psi_2}(s)=0$ whenever $\gcd(A,s)>1$, therefore we obtain
\bal
\sum_{\substack{a =1, \\ \gcd(a,c)=1}}^c \overline{\psi_1} (a) {\psi_2}(a)& = \sum_{\substack{s \mid c,\\ \gcd(A,s)=1}} \mu(s) \overline{\psi_1} (s) {\psi_2}(s) \sum_{\substack{t=1}}^{c/s} \overline{\psi_1} (t) {\psi_2}(t). \label{eq223_1}
\nal
Now if $A \mid c$, $s \mid c$ and $\gcd(A,s)=1$, then $A \mid c/s$. First we consider $\psi_1 \neq \psi_2$. Then
\bal
\sum_{\substack{t=1}}^{c/s} \overline{\psi_1} (t) {\psi_2}(t)=0. \label{eq223_2}
\nal
Therefore if $\psi_1 \neq \psi_2$ by \eqref{eq223_1} and \eqref{eq223_2} we have
\bals
\sum_{\substack{a =1, \\ \gcd(a,c)=1}}^c \overline{\psi_1} (a) {\psi_2}(a)=0.
\nals
Second we consider the case when $\psi_1 = \psi_2$. Noting that in this case $A=M_2$, we have
\bal
\sum_{\substack{t=1}}^{c/s} \overline{\psi_1} (t) {\psi_2}(t)= \frac{c}{sM_2} \phi(M_2). \label{eq223_3}
\nal
Therefore if $\psi_1 = \psi_2$ by by \eqref{eq223_1} and \eqref{eq223_3} we have
\bal
\sum_{\substack{a =1, \\ \gcd(a,c)=1}}^c \overline{\psi_1} (a) {\psi_2}(a) & = \sum_{\substack{s \mid c, \\ \gcd(M_2,s)=1}} \mu(s) \frac{c}{sM_2} \phi(M_2) = c \cdot \frac{\phi(M_2)}{M_2} \sum_{\substack{s \mid c, \\ \gcd(M_2,s)=1}}  \frac{\mu(s)}{s}. \label{eq21_1}
\nal
Noting that
\bals
\sum_{\substack{s \mid c}} \frac{\mu(s)}{s}  = \frac{\phi(c)}{c}= \prod_{p \mid c} \frac{p-1}{p},
\nals
we have
\bals
\sum_{\substack{s \mid c, \\ \gcd(M_2,s)=1}} \frac{\mu(s)}{s}  =  \prod_{\substack{p \mid c,\\ \gcd(p,M_2)=1}} \frac{p-1}{p}= \frac{\prod_{\substack{p \mid c}} \frac{p-1}{p}}{\prod_{\substack{p \mid M_2}} \frac{p-1}{p}} = \frac{\phi(c)/c}{\phi(M_2)/M_2}.
\nals
Putting this in \eqref{eq21_1} completes the proof.

\end{proof}
Before we prove the main result of this section we prove \eqref{7_4}.
\begin{lemma} \label{7_5} Let $\gcd(a,c)=1$, then we have
\begin{align*}
[0]_{a/c} L_d(z)= \mathcal{R}_{2,\chi_1,\chi_1}(c,1)-d\mathcal{R}_{2,\chi_1,\chi_1}(c,d).
\end{align*}
\end{lemma}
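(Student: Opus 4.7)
My approach is to exploit the classical quasi-modular transformation law of $E_2$. Direct evaluation of \eqref{eis} at $(k,\epsilon,\psi)=(2,\chi_1,\chi_1)$ gives $E_2(\chi_1,\chi_1;z) = 1 - 24\sum_{n\geq 1}\sigma_1(n)q^n$, the classical quasi-modular Eisenstein series, which satisfies
\bals
E_2(\gamma z) = (c_\gamma z + d_\gamma)^2 E_2(z) - \frac{6i c_\gamma (c_\gamma z + d_\gamma)}{\pi}
\nals
for every $\gamma=\begin{pmatrix} a & b \\ c_\gamma & d_\gamma \end{pmatrix}\in SL_2(\zz)$. I would apply this to both summands of $L_d(z) = E_2(z) - d E_2(dz)$ and show that the non-holomorphic correction terms cancel, simultaneously confirming modularity of $L_d$ and extracting its constant term at $a/c$.

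Fix $\gamma=\begin{pmatrix} a & b \\ c & e \end{pmatrix}\in SL_2(\zz)$ (renaming the bottom-right entry to $e$ to avoid conflict with the scaling $d$). The first summand immediately gives $(cz+e)^{-2}E_2(\gamma z) = E_2(z) - 6ic/(\pi(cz+e))$. To handle $(cz+e)^{-2}E_2(d\gamma z)$, I would rewrite $d\gamma z = \gamma'(\sigma' z)$ with $\gamma'\in SL_2(\zz)$ and $\sigma'$ upper triangular of determinant $d$. Letting $g=\gcd(c,d)$, $c=g\tilde{c}$, $d=g\tilde{d}$, the hypothesis $\gcd(a,c)=1$ forces $\gcd(\tilde{d}a,\tilde{c})=1$, so one takes $\gamma'=\begin{pmatrix} \tilde{d}a & \beta \\ \tilde{c} & \delta \end{pmatrix}\in SL_2(\zz)$ and then $\sigma'=\begin{pmatrix} g & t \\ 0 & \tilde{d} \end{pmatrix}$ is forced by the matrix identity $\begin{pmatrix} d & 0 \\ 0 & 1 \end{pmatrix}\gamma=\gamma'\sigma'$. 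The payoff is the identity $\tilde{c}(\sigma'z)+\delta = (cz+e)/\tilde{d}$, which after applying quasi-modularity at $\gamma'(\sigma'z)$ yields
\bals
(cz+e)^{-2}E_2(d\gamma z) = \frac{1}{\tilde{d}^2} E_2(\sigma'z) - \frac{6i\tilde{c}}{\pi\tilde{d}(cz+e)}.
\nals

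Combining the two summands, the quasi-modular correction terms sum to $\tfrac{6i}{\pi(cz+e)}(-c + d\tilde{c}/\tilde{d})$, which vanishes since $d\tilde{c}/\tilde{d}=g\tilde{c}=c$; this cancellation is the conceptual reason $L_d$ is a genuine modular form. Taking $z\to i\infty$ (with $\sigma'z\to i\infty$ as well) then gives $[0]_{a/c}L_d = 1 - d/\tilde{d}^2 = 1 - g^2/d$. A short check of \eqref{RR} confirms $\mathcal{R}_{2,\chi_1,\chi_1}(c,1)=1$ and $d\,\mathcal{R}_{2,\chi_1,\chi_1}(c,d) = d(g/d)^2 = g^2/d$, matching exactly. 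The main obstacle I anticipate is producing the matrix decomposition $\begin{pmatrix} d & 0 \\ 0 & 1 \end{pmatrix}\gamma=\gamma'\sigma'$ cleanly and verifying the cocycle-like relation $\tilde{c}(\sigma'z)+\delta=(cz+e)/\tilde{d}$; once these are in hand, the cancellation of the correction terms and the limit at $i\infty$ are routine.
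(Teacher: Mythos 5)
Your proof is correct and follows essentially the same route as the paper: both apply the quasi-modular transformation law $E_2(\gamma z)=(cz+e)^2E_2(z)-\tfrac{6ic}{\pi}(cz+e)$ and decompose $\mathrm{diag}(d,1)\gamma=\gamma'\sigma'$ with $\gamma'\in SL_2(\zz)$ having bottom row $(c/\gcd(c,d),\ *)$ and $\sigma'$ upper triangular, so that the non-holomorphic corrections cancel and the limit gives $(d-\gcd(c,d)^2)/d$. The only cosmetic difference is that the paper writes the decomposition via $e=ad/\gcd(c,ad)$, $g=c/\gcd(c,ad)$, which coincides with your $\tilde d a$, $\tilde c$ since $\gcd(c,ad)=\gcd(c,d)$.
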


{\begin{proof}
Since $\gcd(a,c)=1$, there exist $\beta, \gamma \in \zz$ such that $A=\begin{bmatrix} a & \beta \\ c & \gamma \end{bmatrix} \in SL_2(\zz)$. Then by \cite[(1.21)]{Kohler} we have
\begin{align}
& E_2(\chi_1,\chi_1;A(z))=(cz+\gamma)^2 E_2(\chi_1,\chi_1;z) - \frac{6ic}{ \pi} (cz+\gamma), \label{7_7}
\end{align}
where $A(z)$ is the usual linear fractional transformation. Let $e= \frac{a d  }{\gcd(c,a d  )}$ and $g=\frac{c}{\gcd(c,a d  )}$. Then since $\gcd(e,g)=1$ there exist $f, h$ such that $ \begin{bmatrix} e & f \\ g & h \end{bmatrix}  \in SL_2(Z)$. Hence we have
\begin{align*}
 E_{2}(\chi_1,\chi_1; d   A(z);) &=  E_{2} \left(\chi_1,\chi_1;\begin{bmatrix} e & f \\ g & h \end{bmatrix} \begin{bmatrix} a h  d   - c f & \beta h d   - \gamma f \\ - ag d   + ce & -\beta g d   + \gamma e \end{bmatrix} (z) \right)\\
& =  E_{2} \left( \chi_1,\chi_1;\begin{bmatrix} \frac{a d  }{\gcd(c,a d  )} & f \\ \frac{c}{\gcd(c,a d  )} & h \end{bmatrix} \begin{bmatrix} a h  d   - c f & \beta h d   - \gamma f \\ 0 &  \frac{ d }{\gcd(c,a d )}  \end{bmatrix} (z) \right)\\
& =  \left(\frac{\gcd(c  ,d)}{ d  } \right)^2 (cz+\gamma)^2 E_{2} \left(\chi_1,\chi_1; \begin{bmatrix} a h  d   - c f & \beta h d   - \gamma f \\ 0 &  \frac{d}{\gcd(c,a d  )}  \end{bmatrix} (z) \right)\\
& \qquad  - \frac{6ic}{ \pi  d  } (cz+\gamma),
\end{align*}
where in the last line we used \eqref{7_7}. Thus we obtain
\begin{align*}
[0]_{a/c} L_d(z) & =[0]_{a/c} (E_2(\chi_1,\chi_1;z)- d  E_2(\chi_1,\chi_1;d  z)) = \frac{ d -\gcd( c ,d)^2}{ d }\\
& = \mathcal{R}_{2,\chi_1,\chi_1}(c,1)-d\mathcal{R}_{2,\chi_1,\chi_1}(c,d).
\end{align*}

\end{proof}}%

\begin{theorem} \label{th:4_1} Let $c \mid N$ and let $(\epsilon_1,\psi_1),(\epsilon_2,\psi_2) \in \{ (\epsilon,\psi) \in \mathcal{E}(k,N,\chi) : M \mid c \}$.  Then we have
\begin{align*}
[0]_{c,{\psi_2}} E_k( \epsilon_1,\psi_1;dz)=\begin{cases}
[0]_{1/c} E_k(\epsilon_2,\psi_2;dz) & \mbox{ if $\psi_1=\psi_2$, }\\
0 & \mbox{otherwise.}
\end{cases} 
\end{align*}
Let $c \mid N$ and let $(\epsilon_2,\psi_2) \in \{ (\epsilon,\psi) \in \mathcal{E}(2,N,\chi_1) : M \mid c \}$, then we have
\begin{align*}
[0]_{c,{\psi_2}} L_d(z)=\begin{cases}
[0]_{1/c} L_d(z) & \mbox{ if $\psi_2=\chi_1$, }\\
0 & \mbox{otherwise.}
\end{cases} 
\end{align*}
\end{theorem}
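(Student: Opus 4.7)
The plan is to reduce the averaging formula \eqref{newcusp} to the orthogonality relation in Lemma \ref{lem:4_2}, using the explicit formulas \eqref{7_6} and Lemma \ref{7_5}. The key observation is that in both cases the factor $\mathcal{R}_{k,\epsilon_1,\psi_1}(c,M_1 d)$ (respectively the $L_d$-value) appearing in the constant term at $a/c$ is independent of $a$, so the $\psi_2(a)$-weighted average over $a$ coprime to $c$ only affects the Dirichlet-character factor.

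First I would treat the Eisenstein case. Expanding \eqref{newcusp} and substituting \eqref{7_6} gives
\begin{align*}
[0]_{c,\psi_2} E_k(\epsilon_1,\psi_1;dz) = \frac{\mathcal{R}_{k,\epsilon_1,\psi_1}(c,M_1 d)}{\phi(c)} \sum_{\substack{a=1\\ \gcd(a,c)=1}}^{c} \psi_2(a)\overline{\psi_1}(a).
\end{align*}
Since $M_1\mid c$ (by the assumption $(\epsilon_1,\psi_1)\in\{(\epsilon,\psi)\in\mathcal{E}(k,N,\chi):M\mid c\}$) and likewise $M_2 \mid c$, Lemma \ref{lem:4_2} applies directly and shows the inner sum equals $0$ when $\psi_1\neq\psi_2$ and $\phi(c)$ when $\psi_1=\psi_2$. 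In the latter case, the relation $\epsilon_1\psi_1=\chi=\epsilon_2\psi_2$ forces $\epsilon_1=\epsilon_2$ and hence $M_1 = M_2$, so the surviving value $\mathcal{R}_{k,\epsilon_1,\psi_1}(c,M_1 d)$ coincides with $\mathcal{R}_{k,\epsilon_2,\psi_2}(c,M_2 d)$, which by \eqref{7_6} with $a=1$ is exactly $[0]_{1/c} E_k(\epsilon_2,\psi_2;dz)$.

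Second, for $L_d$ the argument is even simpler: Lemma \ref{7_5} shows that $[0]_{a/c}L_d(z)$ is independent of $a$ (it does not involve any Dirichlet character), so
\begin{align*}
[0]_{c,\psi_2} L_d(z) = \bigl(\mathcal{R}_{2,\chi_1,\chi_1}(c,1) - d\,\mathcal{R}_{2,\chi_1,\chi_1}(c,d)\bigr) \cdot \frac{1}{\phi(c)} \sum_{\substack{a=1 \\ \gcd(a,c)=1}}^{c} \psi_2(a).
\end{align*}
Applying Lemma \ref{lem:4_2} with $\psi_1=\chi_1$ (whose conductor $1$ trivially divides $c$) the sum is $\phi(c)$ if $\psi_2=\chi_1$ and $0$ otherwise. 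In the former case the value is precisely $[0]_{1/c} L_d(z)$ by Lemma \ref{7_5}.

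There isn't really a main obstacle: the entire proof is the mechanical assembly of \eqref{newcusp}, \eqref{7_6}, Lemma \ref{7_5}, and Lemma \ref{lem:4_2}. The only subtle point worth flagging in the write-up is that one must verify $\epsilon_1=\epsilon_2$ in the nonvanishing branch of the Eisenstein case (so that the right-hand side really involves $(\epsilon_2,\psi_2)$ and not $(\epsilon_1,\psi_1)$), which follows immediately from $\epsilon_i\psi_i=\chi$.
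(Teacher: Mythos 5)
Your proof is correct and follows essentially the same route as the paper: substitute \eqref{7_6} (resp.\ Lemma \ref{7_5}) into the average \eqref{newcusp} and invoke the orthogonality of Lemma \ref{lem:4_2}. Your explicit remark that $\psi_1=\psi_2$ forces $\epsilon_1=\epsilon_2$ is a worthwhile clarification that the paper leaves implicit.
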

\begin{proof} If $(k, \epsilon, \psi) \neq (2,\chi_1,\chi_1)$ by \eqref{7_6} we have
{ \begin{align*}
 [0]_{c,{\psi_2}} E(\epsilon_1,\psi_1;dz)
& = \frac{1}{\phi(c)}  \sum_{\substack{a =1,\\ \gcd(a,c)=1} }^c  {\psi_2}(a) \overline{\psi_1}(a) \mathcal{R}_{k,\epsilon_1,\psi_1}(c,M_1d) \\
& = [0]_{1/c} E_k(\epsilon_1,\psi_1;dz) \frac{1}{\phi(c)} \sum_{\substack{a =1,\\ \gcd(a,c)=1} }^c  {\psi_2}(a)  \overline{\psi_1}(a).
\end{align*}}%
Therefore by Lemma \ref{lem:4_2} we obtain the the first part of the statement. Proof of the second part is similar.

\end{proof}

\section{Proof of the Main Theorem} \label{sec_proof}

Recall that $E_{k}(\epsilon,\psi;dz)$ is defined by \eqref{eis} and the set
\begin{align}
& \{ E_{k}(\epsilon,\psi;dz ) : (\epsilon,\psi) \in \mathcal{E}(k,N,\chi), d \mid N/LM \}\label{basis}
\end{align}
constitutes a basis for $E_k(\Gamma_0(N),\chi)$ whenever $(k,\chi) \neq (2,\chi_1)$ and the set
\begin{align}
& \{ E_{2}(\chi_1,\chi_1;z)-d E_{2}(\chi_1,\chi_1;dz) : 1< d \mid N/LM \} \label{basis2}\\
& \qquad \cup \{ E_{2}(\epsilon,\psi;dz ) : (\epsilon,\psi) \in \mathcal{E}(2,N,\chi_1), (\epsilon,\psi) \neq ( \chi_1,\chi_1), d \mid N/LM \} \nonumber
\end{align}
constitutes a basis for  $E_2(\Gamma_0(N),\chi_1)$, see \cite[Theorems 8.5.17 and 8.5.22]{cohenbook}, or \cite[Proposition 5]{weis}. 

Now we prove the main theorem whenever $(k,\chi) \neq (2,\chi_1)$. Let $f(z) \in M_k(\Gamma_0(N),\chi)$ where $N,k \in \nn$, $k \geq 2$ and $(k,\chi) \neq (2,\chi_1)$. Then by \eqref{basis} we have
\begin{align}
E_f(z) = \sum_{(\epsilon, \psi) \in \mathcal{E}(k,N,\chi) } \sum_{d \mid N/LM} a_f(\epsilon,\psi,d) E_k(\epsilon,\psi; dz), \label{deceq_1}
\end{align}
for some $a_f(\epsilon,\psi,d) \in \cc$. Our strategy for the proof is, using the interplay between the constant terms of Eisenstein series, to create sets of linear equations (see \eqref{fix2_1}) and to solve those sets of linear equations for $a_f(\epsilon,\psi,d)$ using Theorem \ref{th:3_1}.

By \eqref{decomp} we have $f(z)=E_f(z)+S_f(z)$, where $E_f(z) \in E_k(\Gamma_0(N),\chi)$ and $S_f(z) \in S_k(\Gamma_0(N),\chi)$ are unique. Since by definition $S_f(z)$ vanishes at all cusps, we have $[0]_{a/c}f(z)=[0]_{a/c}E_f(z)$. Therefore by \eqref{deceq_1} for each $c \mid N$ and $a \in \zz$ such that $\gcd(a,c)=1$, we obtain
\begin{align*}
[0]_{a/c}f(z) = \sum_{(\epsilon, \psi) \in \mathcal{E}(k,N,\chi) } \sum_{d \mid N/LM} a_f(\epsilon,\psi,d) [0]_{a/c} E_k(\epsilon,\psi;dz).
\end{align*}
Let $(\epsilon_2,\psi_2) \in \mathcal{E}(k,N,\chi)$, and let the conductors of $\epsilon_2$ and $\psi_2$ be $L_2$ and $M_2$, respectively. Note that for each $\psi_2$ there is a unique $\epsilon_2$ such that $(\epsilon_2,\psi_2) \in \mathcal{E}(k,N,\chi)$. If we average the constant terms with $\psi_2$ using \eqref{newcusp}, then for all $c \mid N$ we obtain
\begin{align*}
[0]_{c, {\psi_2}}f(z) = \sum_{(\epsilon, \psi) \in \mathcal{E}(k,N,\chi) } \sum_{d \mid N/LM} a_f(\epsilon,\psi,d) [0]_{c, {\psi_2}} E_k(\epsilon,\psi;dz).
\end{align*}
Our goal here is to isolate a set of linear equations from which we can determine $a_f(\epsilon_2,\psi_2,d)$ for all $d \mid N/L_2M_2$. By Lemma \ref{lem:4_1} we have $[0]_{c, {\psi_2}} E_k(\epsilon_2,\psi_2;dz)=0$ if $c \mid N$ is such that $M_2 \mid c$, or $M_2 \nmid c$ and $L_2 \mid N/c$. Therefore from now on we restrict $c$ to be in $C_{N}(\epsilon_2,\psi_2)$, see \eqref{CMdef} for definition. By applying Lemma \ref{lem:4_1} one more time we have $[0]_{c,\psi_2} E_k(\epsilon,\psi;dz)=0$ if $M \nmid c$. Therefore for all $c \in  C_{N}(\epsilon_2,\psi_2)$ we have
\begin{align}
[0]_{c, {\psi_2}}f(z) = \sum_{\substack{(\epsilon, \psi) \in \mathcal{E}(k,N,\chi),\\ M \mid c }} \sum_{d \mid N/LM} a_f(\epsilon,\psi,d) [0]_{c, {\psi_2}} E_k(\epsilon,\psi;dz). \label{fix2_2}
\end{align}
Recall that for each $\psi_2$ there is a unique $(\epsilon_2,\psi_2) \in \mathcal{E}(k,N,\chi)$. Additionally, for all $c \in  C_{N}(\epsilon_2,\psi_2)$ we have $(\epsilon_2,\psi_2) \in \{(\epsilon,\psi) \in \mathcal{E}(k,N,\chi) : M \mid c \}$. Therefore for all $c \in  C_{N}(\epsilon_2,\psi_2)$ we have
\begin{align*}
[0]_{c, {\psi_2}}f(z) &= \sum_{\substack{(\epsilon, \psi) \in \mathcal{E}(k,N,\chi),\\ (\epsilon, \psi) \neq (\epsilon_2,\psi_2) \\ M \mid c }} \sum_{d \mid N/LM} a_f(\epsilon,\psi,d) [0]_{c, {\psi_2}} E_k(\epsilon,\psi;dz)\\
& \quad + \sum_{d \mid N/L_2M_2} a_f(\epsilon_2,\psi_2,d) [0]_{c, {\psi_2}} E_k(\epsilon_2,\psi_2;dz).
\end{align*}
From this, using Theorem \ref{th:4_1}, we obtain
\begin{align*}
[0]_{c, {\psi_2}}f(z) &= \sum_{d \mid N/L_2M_2} a_f(\epsilon_2,\psi_2,d) [0]_{1/c} E_k(\epsilon_2,\psi_2;dz).
\end{align*}
Since $M_2 \mid c$ we have
\begin{align*}
[0]_{1/c} E_k(\epsilon_2,\psi_2;dz)=\mathcal{R}_{k,\epsilon_2,\psi_2}(c,M_2d)= \mathcal{R}_{k,\epsilon_2,\psi_2}(c/M_2,d).
\end{align*}
Hence for all $c \in  C_{N}(\epsilon_2,\psi_2)$ we have
\begin{align}
[0]_{c, {\psi_2}}f(z) = \sum_{d \mid N/L_2M_2} a_f(\epsilon_2,\psi_2,d) \mathcal{R}_{k,\epsilon_2,\psi_2}(c/M_2,d). \label{fix2_1}
\end{align}
Below we solve the equations coming from \eqref{fix2_1} for $a_f(\epsilon_2,\psi_2,d)$ using Theorem \ref{th:3_1}. For $d_2 \mid N/L_2M_2$ we consider the sum
\begin{align}
\sum_{c \in C_{N}(\epsilon_2,\psi_2) } \mathcal{R}_{k,\epsilon_2,\psi_2}(d_2,c/M_2) \mathcal{S}_{k,N/L_2M_2,\epsilon_2,\psi_2}(d_2,c/M_2) [0]_{c, {\psi_2}}f(z), \label{fix28_3}
\end{align}
which, by \eqref{fix2_1}, equals to
{\footnotesize \begin{align}
 = \sum_{c \in C_{N}(\epsilon_2,\psi_2) } \mathcal{R}_{k,\epsilon_2,\psi_2}(d_2,c/M_2) \mathcal{S}_{k,N/L_2M_2,\epsilon_2,\psi_2}(d_2,c/M_2) \sum_{d \mid N/L_2M_2} a_f(\epsilon_2,\psi_2,d) \mathcal{R}_{k,\epsilon_2,\psi_2}(c/M_2,d). \label{fix28_1}
\end{align}}
Rearranging the terms of \eqref{fix28_1} we obtain
{\footnotesize \begin{align}
& \sum_{c \in C_{N}(\epsilon_2,\psi_2) } \mathcal{R}_{k,\epsilon_2,\psi_2}(d_2,c/M_2) \mathcal{S}_{k,N/L_2M_2,\epsilon_2,\psi_2}(d_2,c/M_2) [0]_{c, {\psi_2}}f(z) \nonumber\\
& =\sum_{d \mid N/L_2M_2} a_f(\epsilon_2,\psi_2,d) \sum_{c \in C_{N}(\epsilon_2,\psi_2) } \mathcal{R}_{k,\epsilon_2,\psi_2}(d_2,c/M_2) \mathcal{S}_{k,N/L_2M_2,\epsilon_2,\psi_2}(d_2,c/M_2)   \mathcal{R}_{k,\epsilon_2,\psi_2}(c/M_2,d). \label{eq222_1}
\end{align}}
Recall that $C_{N}(\epsilon_2,\psi_2)$ is defined by \eqref{CMdef} and is a set equivalent to the set
\bals
\{ c : M_2 \mid c, ~c/M_2 \mid N/L_2M_2 \},
\nals
i.e., $c/M_2$ runs through all the divisors of $ N/L_2M_2$ as $c$ runs through all the elements of $C_{N}(\epsilon_2,\psi_2)$. In Theorem \ref{th:3_1} we use this and we replace $N$ by $N/L_2M_2$, $t$ by $c/M_2$, $c$ by $d_2$ and $d$ by $d$  to obtain
\begin{align}
& \sum_{c \in C_{N}(\epsilon_2,\psi_2) } \mathcal{R}_{k,\epsilon_2,\psi_2}(d_2,c/M_2) \mathcal{S}_{k,N/L_2M_2,\epsilon_2,\psi_2}(d_2,c/M_2)   \mathcal{R}_{k,\epsilon_2,\psi_2}(c/M_2,d) \nonumber \\
& = \begin{cases}
\ds \prod_{ p \mid N/L_2M_2} \frac{p^k - \epsilon_2(p) \overline{\psi_2}(p)}{p^k} & \mbox{ if $d=d_2$,}\\
0 & \mbox{ if $d \neq d_2$.} \label{eq222_2}
 \end{cases} 
\end{align}
Therefore from \eqref{eq222_1} and \eqref{eq222_2} we obtain
\begin{align*}
& \sum_{c \in C_{N}(\epsilon_2,\psi_2) } \mathcal{R}_{k,\epsilon_2,\psi_2}(d_2,c/M_2) \mathcal{S}_{k,N/L_2M_2,\epsilon_2,\psi_2}(d_2,c/M_2) [0]_{c, {\psi_2}}f(z) \\
 & =  a_f(\epsilon_2,\psi_2,d_2) \prod_{ p \mid N/L_2M_2} \frac{p^k - \epsilon_2(p) \overline{\psi_2}(p)}{p^k}. 
\end{align*}
Since $p \mid L_2 M_2$ implies $\epsilon_2(p) \overline{\psi_2}(p) =0$ we have
{\footnotesize \begin{align*}
a_f(\epsilon_2,\psi_2,d_2) &= \prod_{p \mid N} \frac{p^k}{p^k-\epsilon_2(p) \overline{\psi_2}(p)} \sum_{c \in C_{N}(\epsilon_2,\psi_2) } \mathcal{R}_{k,\epsilon_2,\psi_2}(d_2,c/M_2) \mathcal{S}_{k,N/L_2M_2,\epsilon_2,\psi_2}(d_2,c/M_2) [0]_{c,{\psi_2}}f.
\end{align*}}
This completes the proof of Theorem \ref{mainth} when $(k,\chi) \neq (2, \chi_1)$.  

Now let $(k,\chi) = (2, \chi_1)$, then a basis of $E_2(\Gamma_0(N),\chi_1)$ is given by \eqref{basis2}. Using Lemma \ref{7_5}, Theorem \ref{th:4_1} and arguments similar to the first part of this proof we obtain
\begin{align*}
E_f(z) & =\sum_{1<d \mid N} c_f(\chi_1,\chi_1,d) L_d(z) +\sum_{\substack{(\epsilon, \psi) \in \mathcal{E}(2,N,\chi), \\ (\epsilon,\psi) \neq (\chi_1,\chi_1)} } \sum_{d \mid N/LM} a_f(\epsilon,\psi,d) E_2(\epsilon,\psi;dz),
\end{align*}
where $a_f(\epsilon,\psi,d)$ is as above (with $k=2$) and 
{\footnotesize \begin{align*}
c_f(\chi_1,\chi_1,d) = -\frac{1}{d} \prod_{p \mid N} \frac{p^2}{p^2 - 1 } \sum_{\substack{c \mid N} }  \mathcal{R}_{2,\chi_1,\chi_1}(d,c) \mathcal{S}_{2,N,\chi_1,\chi_1}(d,c) [0]_{c,{\chi_1}}f = -\frac{1}{d} a_f(\chi_1,\chi_1,d).
\end{align*}}
On the other hand we have
{ \begin{align*}
\sum_{1<d \mid N} c_f(\chi_1,\chi_1,d) L_d(z) & = \sum_{1<d \mid N} c_f(\chi_1,\chi_1,d) (E_2(\chi_1,\chi_1;z)-dE_2(\chi_1,\chi_1;dz))\\
&=\sum_{1<d \mid N} c_f(\chi_1,\chi_1,d) E_2(\chi_1,\chi_1;z) \\
& \qquad + \sum_{1<d \mid N} a_f(\chi_1,\chi_1,d) E_2(\chi_1,\chi_1;dz)\\
&= \sum_{d \mid N} a_f(\chi_1,\chi_1,d) E_2(\chi_1,\chi_1;dz),
\end{align*}}%
since
\begin{align*}
a_f(\chi_1,\chi_1,1) \prod_{p \mid N} \frac{p^2-1}{p^2}  &=\sum_{c \mid N}  \mathcal{R}_{2,\chi_1,\chi_1}(1,c)\mathcal{S}_{2,N,\chi_1,\chi_1}(1,c) [0]_{c,\chi_1} f \\
&= \sum_{c \mid N} \frac{\mu(c)}{c^2} \sum_{1< d \mid N} c_f(\chi_1,\chi_1,d) \frac{d-\gcd(d,c)^2}{d} \\
& =\sum_{1< d \mid N} c_f(\chi_1,\chi_1,d)  \sum_{c \mid N} \frac{\mu(c)}{c^2}  \frac{d-\gcd(d,c)^2}{d} \\
& = \prod_{p \mid N} \frac{p^2-1}{p^2} \sum_{1< d \mid N} c_f(\chi_1,\chi_1,d),
\end{align*}
i.e., $\sum_{1< d \mid N} c_f(\chi_1,\chi_1,d)=a_f(\chi_1,\chi_1,1)$. This completes the proof of the Main Theorem.

At last we prove a lemma which is useful in reducing the number of constant term computations in applications of Theorem \ref{mainth}.
\begin{lemma} \label{reducer} Let $f(z) \in M_k(\Gamma_0(N),\chi)$ and $c \mid N$. Let $a/c$ and $a'/c$ be equivalent cusps of $\Gamma_0(N)$. If $(\epsilon,\psi) \in \mathcal{E}(k,N,\chi)$ with $M \mid c$ then we have
\begin{align*}
{\psi}(a)[0]_{a/c}f={\psi}(a')[0]_{a'/c}f.
\end{align*}
\end{lemma}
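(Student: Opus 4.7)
The plan is to construct an element $\gamma \in \Gamma_0(N)$ realising the equivalence $a/c \sim a'/c$, then invoke the modular transformation law $f|_k\gamma = \chi(\delta)\,f$ (where $\delta$ is the bottom-right entry of $\gamma$) to transport the constant term at $a/c$ to the one at $a'/c$. I would first pick $\sigma = \begin{bmatrix} a & b \\ c & d \end{bmatrix}$ and $\sigma' = \begin{bmatrix} a' & b' \\ c & d' \end{bmatrix}$ in $SL_2(\zz)$ so that $\sigma(i\infty)=a/c$ and $\sigma'(i\infty)=a'/c$. The hypothesis supplies $\gamma \in \Gamma_0(N)$ with $\gamma\cdot(a/c)=a'/c$; since $\sigma'^{-1}\gamma\sigma$ fixes $i\infty$ it equals $\pm T_m$ with $T_m = \begin{bmatrix} 1 & m \\ 0 & 1 \end{bmatrix}$ for some $m \in \zz$. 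After possibly replacing $\gamma$ by $-\gamma$ and using $\chi(-1)=(-1)^k$, I can arrange $\gamma\sigma = \sigma' T_m$.

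Applying the slash-identity $f|_k\gamma = \chi(\delta)f$ on the right by $\sigma$ then yields $f|_k \sigma' T_m = \chi(\delta)\,f|_k\sigma$. Because right translation by $T_m$ preserves the constant term of the Fourier expansion at $i\infty$, extracting constant terms produces
\[
[0]_{a'/c}f \;=\; \chi(\delta)\,[0]_{a/c}f.
\]
The main step is to pin down $\delta$ modulo $c$ and modulo $N/c$. A direct computation of $\sigma' T_m \sigma^{-1}$, combined with $ad-bc=1$, gives $\delta = 1 + a(cm + d' - d)$. The condition $\gamma \in \Gamma_0(N)$ forces $N \mid c(d-cm-d')$, hence $cm + d' - d \equiv 0 \pmod{N/c}$, and so $\delta \equiv 1 \pmod{N/c}$. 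Reducing modulo $c$ and using $a'd'\equiv 1 \pmod c$ gives $\delta \equiv a(a')^{-1} \pmod c$; because $M \mid c$, this specialises to $\delta \equiv a(a')^{-1} \pmod M$.

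From the modulo-$M$ congruence (and $\psi$ having conductor $M$) I would conclude $\psi(\delta) = \psi(a)\overline{\psi(a')}$. In the intended setting of Theorem \ref{mainth}, $c \in C_N(\epsilon,\psi)$, so $c/M \mid N/LM$, i.e.\ $L \mid N/c$; the congruence $\delta \equiv 1 \pmod{N/c}$ then upgrades to $\delta \equiv 1 \pmod L$, giving $\epsilon(\delta)=1$. Combining, $\chi(\delta) = \epsilon(\delta)\psi(\delta) = \psi(a)\overline{\psi(a')}$, and substituting into $[0]_{a'/c}f = \chi(\delta)[0]_{a/c}f$ and multiplying by $\psi(a')$ produces the desired identity.

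The main obstacle is the bookkeeping of congruences: one has to exploit simultaneously that $\delta$ is close to $1$ modulo $N/c$ (which kills the $\epsilon$-factor, precisely when $L \mid N/c$) while being $a(a')^{-1}$ modulo $M$ (which supplies the needed $\psi$-factor), and confirm that the splitting $\chi=\epsilon\psi$ respects this decomposition of the modulus.
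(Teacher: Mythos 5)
Your proof is correct and follows essentially the same route as the paper: construct the matrix $\gamma\in\Gamma_0(N)$ carrying $a/c$ to $a'/c$, extract $[0]_{a'/c}f=\chi(\delta)\,[0]_{a/c}f$ from the transformation law, and evaluate $\chi(\delta)=\epsilon(\delta)\psi(\delta)$ via congruences for $\delta$ modulo $M$ and modulo $N/c$. Your explicit observation that $\epsilon(\delta)=1$ requires $L\mid N/c$ --- automatic for $c\in C_{N}(\epsilon,\psi)$, which is the setting where the lemma is applied --- is in fact more careful than the paper's own justification of that step.
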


\begin{proof}
Let $a/c$ and $a'/c$ be equivalent cusps of $\Gamma_0(N)$, then there exists a matrix $\begin{bmatrix} \alpha & \beta\\ \gamma & \delta  \end{bmatrix} \in \Gamma_0(N)$ such that
\bal
\begin{bmatrix} \alpha & \beta\\ \gamma & \delta  \end{bmatrix} \begin{bmatrix} a & b \\ c & d  \end{bmatrix} = \begin{bmatrix} a' & b' \\ c & d'  \end{bmatrix}. \label{eqfeb_1}
\nal
Then using transformation properties of modular forms we have
\bals
\psi(a') [0]_{a'/c} f & = \psi(a') \lim_{z \rightarrow i\infty} (cz+d')^{-k} f\left( \frac{a' z + b'}{cz + d'} \right)\\
& = \psi(a') \lim_{z \rightarrow i\infty} (cz+d')^{-k} \chi(\delta) \left( \gamma  \frac{a z + b}{cz + d} + \delta \right)^k f\left( \frac{a z + b}{cz + d} \right)\\
& = \psi(a') \chi(\delta) \lim_{z \rightarrow i\infty} (cz+d)^{-k} f\left( \frac{a z + b}{cz + d} \right)\\
& = \psi(a') \chi(\delta) [0]_{a/c}f.
\nals
We have $M\mid c$ and by \eqref{eqfeb_1} we have $a'=\alpha a + \beta c$, thus $\psi(a')=\psi(\alpha )\psi( a)$. Since $M \mid c$, $c \mid N$ and $N \mid \gamma$ we have $M \mid \gamma$, therefore we have $ 1= \psi(1)= \psi( \alpha \delta - \gamma \beta)$ which implies $\psi(\alpha) = \overline{\psi}(\delta)$. Putting these together we obtain
\bals
\psi(a') \chi(\delta)= \psi(a) \overline{\psi}(\delta) \chi(\delta).
\nals
Since $\gcd(\delta,N)=1$ we have $\overline{\psi}(\delta) \chi(\delta)=\epsilon(\delta)$. Now we prove $\epsilon(\delta)=1$ which finishes the proof. Recall that $LM \mid N$, therefore $c \mid M$ implies $L \mid N/c$, i.e., $L \mid \gamma/c$. From \eqref{eqfeb_1} we have $\delta = 1 - a \gamma/c$, thus, since $\gcd(a,c)=1$ and $L \mid \gamma/c$, we have $\epsilon(\delta)=\epsilon(1 - a \gamma/c)=\epsilon(1)=1$. 
\end{proof}

\section*{Acknowledgements}
I would like to thank Professor Amir Akbary for helpful discussions throughout the course of this research. I am also grateful to Professor Shaun Cooper, who gave the vision which initiated this research. Words cannot adequately express my gratitude towards Professor Emeritus Kenneth S. Williams, who has given many useful suggestions on an earlier version of this manuscript. I would like to thank the referee for pointing out the problems in an earlier version of the proof of Theorem \ref{th:3_1}.

\appendix

\section{The SAGE functions for computing the constant terms of eta quotients at a given cusp} \label{appa}

Let $r_d \in \zz$, not all zeros,  $N \in \nn$ and define
\bals
f(z)=\prod_{d \mid N} \eta^{r_d}(dz).
\nals
Assuming $f(z)$ to be a modular form the following SAGE functions (written using version 9.1 of the software \cite{sagemath}) help computing $[0]_{a/c} f$, the constant term of $f(z)$ at the cusp $a/c$.

{\scriptsize \begin{lstlisting}
def v_eta1(a,b,c,d): 
    if c%2==1:
        return kronecker_symbol(d,abs(c))
    if c%2==0:
        return kronecker_symbol(c,abs(d))

def v_eta2(a,b,c,d): 
    if c%2==1:
        return 1
    if c%2==0:
        return (-1)^(1/4*(sgn(c)-1)*(sgn(d)-1))

def v_eta3(a,b,c,d): 
    if c%2==1:
        return (1/24*((a+d)*c-b*d*(c^2-1)-3*c))
    if c%2==0:
        return (1/24*((a+d)*c-b*d*(c^2-1)+3*d-3-3*c*d))

def L_constr(m,d,c): #Proposition 2.1 of [12]
    x1=m*d/gcd(c,m)
    u1=-c/gcd(c,m)
    y1=0
    v1=0
    for i1 in range(-abs(x1*u1),abs(x1*u1)): 
        if gcd(i1,x1)==1 and (1+i1*u1)%x1==0 and ((1+i1*u1)/x1)%2==1:
            y1=i1
            v1=(1+i1*u1)/x1
            return [x1,y1,u1,v1]
            break

def A_find(d,c): #finds a suitable matrix
    for b in range(abs(d*c)): 
        if gcd(b,d)==1 and (1+b*c)%d==0:
            a=(1+b*c)/d
            return [a,b,c,d]
            break

def f_c_of_eta(m,d,c): #Constant term of the 
			#Dedekind eta function at -d/c
    A=A_find(d,c)
    L=L_constr(m,d,c)
    a=A[0]
    b=A[1]
    c=A[2]
    d=A[3]
    x=L[0]
    y=L[1]
    u=L[2]
    v=L[3]
    vv=-m*b*v-y*a
    OP1=v_eta1(x,y,u,v)
    OP2=v_eta2(x,y,u,v)
    OP3=v_eta3(x,y,u,v)
    OP4=(1/24/m*vv*gcd(c,m))
    OP5=(gcd(c,m)/m)^(1/2)
    return [OP1,OP2,OP3,OP4,OP5]

def first_coeff_of_eta_q(N,etaq,a,c): #Computes the constant term of 
                           #the eta quotient [r_1,...,r_d,...,r_N] 
                           #at cusp a/c
    d=-a
    divs=divisors(N)
    L=len(etaq)
    if sum(1/24/divs[i2]*(gcd(c,divs[i2]))^2*etaq[i2] for i2 in range(L))>0:
        return 0 #does the vanishing order analysis
    else:
        VV1=prod((f_c_of_eta(divs[i1],d,c)[0])^(etaq[i1]) for i1 in range(L))
        VV2=prod((f_c_of_eta(divs[i1],d,c)[1])^(etaq[i1]) for i1 in range(L))
        SS1=sum((f_c_of_eta(divs[i1],d,c)[2])*(etaq[i1]) for i1 in range(L))
        SS2=sum((f_c_of_eta(divs[i1],d,c)[3])*(etaq[i1]) for i1 in range(L))
        VV3=prod((f_c_of_eta(divs[i1],d,c)[4])^(etaq[i1]) for i1 in range(L))
        VV4=e^(2*pi*I*(SS1+SS2))
        kk=sum(r for r in etaq)/2
        return (-1)^kk*VV1*VV2*VV3*VV4 
\end{lstlisting}}

By Lemma \ref{reducer} it will be sufficient to compute the constant terms of the eta quotient $f_k(z)$ defined by \eqref{eqr_11} at a set of inequivalent cusps of $\Gamma_0(24)$, which is done below with the help of this code. The set
\bals
\{ 1/1, 1/2, 1/3, 1/4, 1/6, 1/8, 1/12, 1/24 \}
\nals
gives a complete set of inequivalent cusps of $\Gamma_0(24)$, see \cite[Corollary 6.3.23]{cohenbook}. Note that if $k$ is fixed then the code can handle the vanishing order analysis. For instance the output for the code
\begin{lstlisting}
k=3
etaq=[-2*k-1,2*k+1,2*k+1,0,0,2*k+1,2*k+1,-2*k-1]
print(first_coeff_of_eta_q(24,etaq,1,2))
\end{lstlisting}
will be $0$. However, here we are working with a general $k$, and therefore the order analysis has to be done manually. When $k \geq 1$, the vanishing orders of $f_k(z)$ is greater than $0$ at cusps $\{1/2,1/3,1/4,1/6,1/8,1/12\}$. Thus we have
\bals
[0]_{1/2}f_k=0,~[0]_{1/3}f_k=0,~[0]_{1/4}f_k=0 ,~[0]_{1/6}f_k= 0,~[0]_{1/8}f_k= 0 ,~[0]_{1/12}f_k=0.
\nals
To compute $[0]_{1/1} f_k$ and $[0]_{1/24} f_k$ we run the following code:
\begin{lstlisting}
k=var('k')
assume(k,'integer')
eta=[-2*k-1,2*k+1,2*k+1,0,0,2*k+1,2*k+1,-2*k-1]
print(first_coeff_of_eta_q(24,eta,1,1).simplify())
print(first_coeff_of_eta_q(24,eta,1,24).simplify())
\end{lstlisting}
The output will be:
\begin{lstlisting}
-I*6^(k + 1/2)*3^(-2*k - 1)*2^(-4*k - 2)*(-1)^k
1
\end{lstlisting}
Simplifying these we obtain 
\bals
& [0]_{1/1} f_k=- \frac{i^{2k+1} \sqrt{6}}{3^{k+1} 2^{3k+2}},~ [0]_{1/24} f_k=1.
\nals
Putting everything together, for all $k \geq 1$ we have
\bals
& [0]_{1/1} f_k =- \frac{i^{2k+1}\sqrt{6}}{3^{k+1} 2^{3k+2}},~ [0]_{1/2} f_k =0,~ [0]_{1/3}f_k =0,~ [0]_{1/4} f_k=0,\\
& [0]_{1/6}f_k =0,~ [0]_{1/8}f_k =0,~ [0]_{1/12}f_k =0, [0]_{1/24}f_k =1.
\nals

\end{document}